\newtheorem{thm}{Theorem}
\newtheorem{dfn}[thm]{Definition}
\newtheorem{lem}[thm]{Lemma}
\newtheorem{exa}[thm]{Example}
\newtheorem{prop}[thm]{Proposition}
\newtheorem{cor}[thm]{Corollary}
\newcommand{\C}{\mathds{C}}
\newcommand{\R}{\mathds{R}}
\newcommand{\N}{\mathds{N}}
\newcommand{\cB}{\mathcal{B}}
\newcommand{\cC}{\mathcal{C}}
\newcommand{\cF}{\mathcal{F}}
\newcommand{\cJ}{\mathcal{J}}
\newcommand{\cN}{\mathcal{N}}
\newcommand{\cM}{\mathcal{M}}
\newcommand{\cD}{\mathcal{D}}
\newcommand{\cU}{\mathcal{U}}
\newcommand{\cK}{\mathcal{K}}
\newcommand{\cT}{\mathcal{T}}
\newcommand{\cI}{\mathcal{I}}
\newcommand{\cZ}{\mathcal{Z}}
\newcommand{\cE}{\mathcal{E}}
\newcommand{\ux}{\underline{x}}
\newcommand{\ov}{\overline}
\newcommand{\ii}{\sf{i}}
\newcommand{\Z}{\mathds{Z}}
\newcommand{\cA}{\mathcal{A}}
\newcommand{\cL}{\mathcal{L}}
\newcommand{\Lin}{\rm{Lin}}
\author{Konrad Schm\"udgen}
\address{Universit\"at Leipzig, Mathematisches Institut, Augustusplatz 10/11, D-04109 Leipzig, Germany}
\email{schmuedgen@math.uni-leipzig.de}
\date{}
\begin{document}

\maketitle

\begin{abstract}
The fibre theorem \cite{schm2003} for the moment problem on closed semi-algebraic subsets of $\R^d$ is generalized to finitely generated real unital algebras. As an application two new theorems on the rational multidimensional moment problem are proved. Another application is a characterization of moment functionals on the polynomial algebra $\R[x_1,\dots,x_d]$ in terms of extensions. Finally, the fibre theorem and the extension theorem are used to reprove basic results on the complex moment problem due to Stochel and Szafraniec \cite{stochelsz} and Bisgaard \cite{bisgaard}.

\end{abstract}

\textbf{AMS  Subject  Classification (2000)}.
 44A60, 14P10.\\

\textbf{Key  words:} moment problem, real algebraic geometry

\section {Introduction}

This paper deals with the classical multidimensional moment problem. A useful result on the existence of solutions is the fibre theorem \cite{schm2003} for  closed semi-algebraic subsets of $\R^d$. The crucial assumption for this theorem is the existence of sufficiently many {\it bounded} polynomials on the semi-algebraic set. The proof given  in \cite{schm2003} was based on the decomposition theory of states on $*$-algebras. An elementary proof has been found   by T. Netzer \cite{netzer}, see also    M. Marshall \cite[Chapter 4]{marshall}.

In the present paper we  generalize the fibre theorem from the polynomial algebra $\R[x_1,\dots,x_d]$ to arbitrary finitely generated unital real algebras and we add further statements (Theorem \ref{fibreth1}). This general result and these additional statements allow us to derive a number of  new applications. The first application developed in Section \ref{rationalmp} is about  the rational moment problem on semi-algebraic subsets of $\R^d$.  Pioneering work on this problem was done by J. Cimpric, M. Marshall, and T. Netzer in \cite{cmn}. The main assumption therein is  that the preordering is Archimedean. We essentially use the fibre theorem to go beyond the Archimedean case and  derive  two basic  results on the rational multidimensional moment problem (Theorems \ref{rationalmp2} and \ref{rationalmp1}). The second application given in Section \ref{extensionsection} concerns characterizations of moment functionals in terms of extension to some appropriate larger algebra. We prove an  extension theorem (Theorem \ref{extensionthm}) that provides a necessary and sufficient condition for a linear functional on $\R[x_1,\dots,x_d]$ being a moment functional. In the case $d=2$ this result leads to a  theorem of J. Stochel and F. H. Szafraniec \cite{stochelsz} on the complex moment problem (Theorem  \ref{extcmpstochelsz}). A third application of the general fibre theorem is a  very short proof of a theorem of T.M. Bisgaard \cite{bisgaard} on the two-sided complex moment problem (Theorem \ref{bisgaardthm}).

Let us fix some definitions and notations which are used throughout this paper.
A {\it complex $*$-algebra} $\cB$ is a complex algebra equipped with
an involution, that is,
an antilinear mapping $\cB\ni b\to b^*\in \cB$ satisfying $(bc)^*= c^*b^*$ and $(b^*)^*=b$  for\, $b,c\in \cB$.
Let $\sum \cB^2$ denote the set of finite sums $\sum_j b_j^*b_j$ of hermitian squares $b_j^*b_j$, where $b_j\in \cB$. A linear functional $L$ on $\cB$ is called {\it positive} if $L$ is nonnegative on $\sum \cB^2$, that is, if\, $L(b^*b)\geq 0$\, for all $b\in \cB$.

A  {\it $\ast$-semigroup}\, is a semigroup $S$ with a mapping\, $s^*\to s$\, of\, $S$\, into itself, called  involution, such that $(st)^*=t^*s^*$ and $(s^*)^*=s$ for $s,t\in S$. The {\it semigroup $\ast$-algebra} $\C[ S]$ of $S$ is the complex $*$-algebra is the vector space  of all finite sums $\sum_{s\in S} \alpha_s s$, where $\alpha_s\in \C$, with product and involution
\begin{align*}
\big(\sum\nolimits_s\alpha_s s\big)\big(\sum\nolimits_{t} \beta_t t\big):= \sum\nolimits_{s,t} \alpha_s\beta_t st, \quad \big(\sum\nolimits_{s} \alpha_s s\big)^*:=\sum\nolimits_{s} \ov{\alpha}_s\, s^*.
\end{align*}
The polynomial algebra $\R[x_1,\dots,x_d]$ is abbreviated by $\R_d[\ux]$. By a finite real algebra we mean a real algebra which is finite as a real vector space.

\section{A generalization of the fibre theorem}\label{propertiessmpandmpandfibretheorem}

Throughout this section  $\cA$ is a {\it finitely generated commutative real unital algebra}.  By a character  of $\cA$ we mean an algebra homomorphism $\chi:\cA\to \R$ satisfying $\chi(1)=1$. We equip the set $\hat{\cA}$ of  characters of $\cA$ with the weak topology.

Let us fix a set $\{p_1,\dots,p_d\}$  of  generators of the algebra $\cA$. Then there is a algebra homomorphism $\pi:\R[\ux]\to \cA$ such that $\pi(x_j)=p_j$,  $j=1,\dots,d.$ If\, $\cJ$ denotes the kernel of $\pi$, then  $\cA$ is isomorphic to the quotient algebra\, $\R_d[\ux]/ \cJ$.
Further, each character $\chi$ is completely determined by  the point $x_\chi:=(\chi(p_1),\dots,\chi(p_d))$ of $\R^d$. For simplicity we will identify $\chi$ with $x_\chi$ and write $f(x_\chi):=\chi(f)$ for $f\in \cA$. Under this identification\, $\hat{\cA}$ is a real algebraic subvariety of $\R^d$ given by
\begin{align}\label{Ahatvariety}
\hat{\cA}=\cZ(\cJ):=\{x\in \R^d:p(y)=0~{\rm for} ~p\in \cJ\}.
\end{align}
In the special case $\cA=\R[x_1,\dots,x_d]$ we can take $p_1=x_1,\dots,p_d=x_d$\, and get $\hat{\cA}= \R^d$.
\begin{dfn}
A {\rm preordering} of\, $\cA$ is a subset $\cT$ of $\cA$ such that
\begin{align*}
\cT\cdot \cT\subseteq \cT,~~\cT + \cT \subseteq \cT,~~ 1 \in \cT,~~
a^2 \cT  \in \cT~~{\rm for~all}~~a\in \cA.
\end{align*}
\end{dfn}
Let $\sum \cA^2$ denote  the set of  finite sums $\sum_i a_i^2$ of squares of elements $a_i\in \cA$.
 Since $\cA$ is commutative,   $\sum \cA^2$ is invariant under multiplication and hence  the smallest  preordering of $\cA$.

For a preordering  $\cT$  of $\cA$, we define
\begin{align*}
\cK(\cT)=\{ x\in \hat{\cA}:  f(x)\geq 0 ~~ {\rm for~all}~f\in \cT\}.
\end{align*}
If $\cI$ is an ideal of $\cA$, its zero set is defined by $\cZ(\cI)=\{x\in \hat{\cA}: f(x)=0~{\rm for}~f\in \cI\}$.

The main concepts are introduced in the following definition, see \cite{schm2003} or \cite{marshall}.
\begin{dfn}\label{definionsmpmp}
A preordering $\cT$ of\, $\cA$ has the\\
$\bullet$  \emph{moment property (MP)}\, if   each $\cT$-positive linear functional $L$ on $\cA$ is a moment functional, that is, there exists a positive Borel measure $\mu\in \cM(\hat{\cA})$ such that
\begin{align}\label{Lrepreborelmu}
L(f) =\int_{\hat{\cA}}\, f(x) \, d\mu(x) \quad {\rm for~all}~~f\in \cA,
\end{align}
$\bullet$  \emph{strong moment property (SMP)} if  each $\cT$-positive linear functional $L$ on $\cA$ is a $\cK(\cT)$--moment functional, that is, there is a positive Borel measure $\mu\in \cM(\hat{\cA})$ such that\,
${\rm supp}\, \mu\subseteq \cK(\cT)$\, and
(\ref{Lrepreborelmu}) holds.
\end{dfn}

To state our main result (Theorem \ref{fibreth1})  we need some preparations.

Suppose that $\cT$ a finitely generated preordering of $\cA$ and
let ${\sf f}=\{f_1,\dots,f_k\}$ be a  sequence of generators of  $\cT$.
We consider a fixed $m$-tuple\, ${\sf h}=(h_1,\dots,h_m)$\,  of  elements $h_k\in \cA$.  Let  ${\sf h}(\cK(\cT))$  denote the subset of $\R^m$ defined by
\begin{align}\label{defkKT}
 {\sf h}(\cK(\cT))=\{(h_1(x),\dots,h_m(x)); x \in \cK(\cT)\}.
\end{align}
For $\lambda=(\lambda_1,\dots,\lambda_m)\in \R^m $ let $\cK(\cT)_\lambda$ be the  subset of $\hat{\cA}$ given by
$$
\cK(\cT)_\lambda=\{ x\in \cK(\cT): h_1(x)=\lambda_1,\dots,h_m(x)=\lambda_m \}
$$
and\, $\cT_\lambda$ the  preordering of $\cA$ generated by the sequence
$${\sf f}(\lambda):= \{f_1,\dots,f_k,h_1-\lambda_1 {\cdot} 1,\lambda_1{\cdot} 1-h_1{\cdot} 1,\dots,h_m-\lambda_m{\cdot} 1,\lambda_m {\cdot} 1
-h_m\}$$
Then $\cK(\cT)$ is  the disjoint union of fibre sets $\cK(\cT)_\lambda=\cK(\cT_\lambda)$, where $\lambda \in{\sf h}(\cK(\cT))$.

Let $\cI_\lambda$ be the ideal of $\cA$ generated by $h_1-\lambda_1{\cdot} 1,\dots,h_m-\lambda_m{\cdot} 1$. Then we have
$\cT_\lambda:=\cT +\cI_\lambda$ and  the preordering $\cT_\lambda/\cI_\lambda$ of the quotient algebra $\cA/\cI_\lambda$  is generated by
$$
\pi_\lambda({\sf f}):=\{\pi_\lambda(f_1),\dots,\pi_\lambda(f_k)\},$$
where $\pi_\lambda:\cA\to \cA/\cI_\lambda$ denotes the canonical map.

Further, let $\hat{\cI}_\lambda:=\cI(\cZ(\cI_\lambda))$ denote the ideal of all elements $f\in \cA$ which vanish on the zero set $\cZ(\cI_\lambda)$ of $\cI_\lambda$. Clearly, $\cI_\lambda \subseteq \hat{\cI}_\lambda$ and $\cZ(\cI_\lambda)=\cZ(\hat{\cI}_\lambda)$.
Set $\hat{\cT}_\lambda:=\cT +\hat{\cI}_\lambda$. Then $\hat{\cT}_\lambda/\hat{\cI}_\lambda$ is a preordering of the quotient algebra $\cA/\hat{\cI}_\lambda$.

Note that in  general we have $\cI_\lambda \neq \hat{\cI}_\lambda$ and equality holds if and only if the ideal $\cI_\lambda$ is {\it real}. The latter means that $\sum_j a_j^2\in \cI_\lambda$ for finitely  many elements $a_j\in \cA$ always implies that $a_j\in \cI_\lambda$ for all $j$.
\begin{thm}\label{fibreth1}
Let $\cA$ be a finitely generated commutative real unital algebra and let $\cT$ be a finitely generated preordering of  $\cA$. Suppose that  $h_1,\dots,h_m$ are elements of $\cA$ that are bounded on the  set  $\cK(\cT)$. Then the following are equivalent:\\
(i) $\cT$ has property (SMP) (resp. (MP)) in $\cA$.\\
(ii)  $\cT_\lambda$ satisfies (SMP) (resp. (MP)) in\, $\cA$\, for all\, $\lambda \in {\sf h}(\cK(\cT)).$\\
$(ii)^\prime$  $\hat{\cT}_\lambda$ satisfies (SMP) (resp. (MP)) in\, $\cA$\, for all\, $\lambda \in {\sf h}(\cK(\cT)).$\\
(iii) $\cT_\lambda/\cI_\lambda$ has (SMP) (resp. (MP)) in\, $\cA/\cI_\lambda$\, for all\, $\lambda \in {\sf h}(\cK(\cT)).$\\
$(iii)^\prime$   $\hat{\cT}_\lambda/\hat{\cI}_\lambda$ has (SMP) (resp. (MP)) in\, $\cA/\hat{\cI}_\lambda$\, for all\, $\lambda \in {\sf h}(\cK(\cT)).$
\end{thm}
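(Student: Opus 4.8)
The plan is to prove the theorem by establishing a cycle of implications among (i), (ii), (ii)$'$, (iii), (iii)$'$, treating the (SMP) and (MP) versions in parallel since the arguments differ only in whether we track the support of the representing measure. The key structural observation to exploit is the fibre decomposition stated just before the theorem: $\cK(\cT)$ is the disjoint union of the compacta-in-the-$h$-coordinates fibres $\cK(\cT)_\lambda = \cK(\cT_\lambda)$ over $\lambda \in {\sf h}(\cK(\cT))$, and the hypothesis that $h_1,\dots,h_m$ are bounded on $\cK(\cT)$ means ${\sf h}(\cK(\cT))$ is a bounded subset of $\R^m$.

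First I would dispatch the ``vertical'' equivalences, namely (ii)$\Leftrightarrow$(iii) and (ii)$'\Leftrightarrow$(iii)$'$. These should be essentially formal: a $\cT_\lambda$-positive functional on $\cA$ automatically annihilates $\cI_\lambda$ (since $\pm(h_i - \lambda_i) \in \cT_\lambda$ forces $L(h_i - \lambda_i) = 0$, and then a Cauchy--Schwarz argument on the positive semidefinite form $L(ab)$ forces $L$ to vanish on the ideal generated by the $h_i - \lambda_i$), so it factors through $\cA/\cI_\lambda$; conversely any functional on the quotient pulls back. The representing measure lives on $\cZ(\cI_\lambda) = \widehat{\cA/\cI_\lambda}$ either way, and $\cK(\cT_\lambda) = \cK(\cT_\lambda/\cI_\lambda)$ under the identification, so (SMP)/(MP) transfer verbatim. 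The same reasoning with $\widehat{\cI}_\lambda$ in place of $\cI_\lambda$ gives (ii)$'\Leftrightarrow$(iii)$'$. Next, (ii)$\Leftrightarrow$(ii)$'$: since $\cZ(\cI_\lambda) = \cZ(\widehat{\cI}_\lambda)$, we have $\cK(\cT_\lambda) = \cK(\widehat\cT_\lambda)$, and any $\cT_\lambda$-positive (resp. $\widehat\cT_\lambda$-positive) functional is also $\widehat\cT_\lambda$-positive (resp. $\cT_\lambda$-positive) because a positive functional on $\cA$ vanishes on $\widehat{\cI}_\lambda$ as soon as it vanishes on $\cI_\lambda$ — this uses that $L(a^2)\ge 0$ and that elements of $\widehat{\cI}_\lambda$ are, up to the real radical, captured by sums of squares in $\cI_\lambda$; more carefully, if $L$ is $\cT_\lambda$-positive it is a positive functional that kills $\cI_\lambda$, hence by the Positivstellensatz-type fact that $\widehat{\cI}_\lambda = \sqrt[\R]{\cI_\lambda}$ one gets $L$ kills $\widehat{\cI}_\lambda$ too, so it is $\widehat\cT_\lambda$-positive. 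Thus it suffices to prove (i)$\Leftrightarrow$(ii).

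The implication (i)$\Rightarrow$(ii) is the easy direction: if $L$ is $\cT_\lambda$-positive then it is $\cT$-positive (as $\cT \subseteq \cT_\lambda$), so by (SMP)/(MP) for $\cT$ it is represented by $\mu$ with support in $\cK(\cT)$ (resp. in $\hat{\cA}$); but $L(\pm(h_i-\lambda_i))=0$ forces $\int (h_i-\lambda_i)^2\,d\mu = 0$, hence $\mu$ is concentrated on the fibre $\{h_i = \lambda_i\}$, i.e. $\operatorname{supp}\mu \subseteq \cK(\cT)_\lambda = \cK(\cT_\lambda)$ (resp. $\operatorname{supp}\mu\subseteq\cZ(\cI_\lambda)=\hat\cA$ after passing to the quotient), giving (SMP)/(MP) for $\cT_\lambda$. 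The hard part — and the heart of the theorem — is (ii)$\Rightarrow$(i), which is exactly where the boundedness of the $h_i$ is indispensable. Here the approach mirrors the disintegration argument behind the original fibre theorem \cite{schm2003}, \cite{netzer}: given a $\cT$-positive $L$ on $\cA$, one wants a representing measure. The idea is to push the would-be measure forward under the map ${\sf h}=(h_1,\dots,h_m):\hat\cA\to\R^m$; because each $h_i$ is bounded on $\cK(\cT)$ and $L$ is $\cT$-positive, the functional is determined on the commutative subalgebra generated by the $h_i$ by a measure $\nu$ on the compact set $\overline{{\sf h}(\cK(\cT))}$. One then disintegrates along the fibres: for $\nu$-a.e. $\lambda$ one produces a $\cT_\lambda$-positive functional $L_\lambda$ on $\cA$ (this is the delicate measurable-selection / localization step — it is essentially Netzer's elementary argument, or alternatively the $*$-algebraic decomposition-of-states argument, using that multiplication by any fixed polynomial in the $h_i$ is a bounded operator in the GNS representation so spectral theory applies), invokes hypothesis (ii) to represent each $L_\lambda$ by a measure $\mu_\lambda$ supported on $\cK(\cT_\lambda)=\cK(\cT)_\lambda$ (resp. on $\hat\cA$), and then assembles $\mu := \int \mu_\lambda \, d\nu(\lambda)$, checking by a monotone-class / dominated-convergence argument that this $\mu$ represents $L$ on all of $\cA$ and has support in $\bigcup_\lambda \cK(\cT)_\lambda = \cK(\cT)$ in the (SMP) case. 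The main obstacle is making the disintegration rigorous — ensuring measurability of $\lambda \mapsto L_\lambda$ and that the fibrewise representing measures can be chosen measurably — and this is precisely the content one imports from the known proofs of the polynomial fibre theorem, now transplanted to $\cA = \R_d[\ux]/\cJ$ essentially without change because $\cA$ is finitely generated and everything takes place inside $\R^d$.
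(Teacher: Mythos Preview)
Your treatment of the auxiliary equivalences (ii)$\Leftrightarrow$(iii), (ii)$'\Leftrightarrow$(iii)$'$, (ii)$\Leftrightarrow$(ii)$'$ and of the easy direction (i)$\Rightarrow$(ii) is correct and lines up with what the paper does (it packages these as parts (i)--(iii) of a separate Proposition, citing Scheiderer for (i) and (ii) and giving a real Nullstellensatz plus iterated Cauchy--Schwarz argument for (iii), exactly along the lines you sketch).

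Where you diverge from the paper is in the hard direction (ii)$\Rightarrow$(i). You propose to rerun the disintegration/GNS/spectral argument of \cite{schm2003} or \cite{netzer} directly in the general algebra $\cA$, justifying this by the remark that ``everything takes place inside $\R^d$''. That would work, but it means redoing the delicate measure-theoretic core of the fibre theorem. The paper instead \emph{reduces} to the already-proven polynomial case: write $\cA\cong\R_d[\ux]/\cJ$ via the surjection $\pi$, pull back $\tilde{\cT}:=\pi^{-1}(\cT)$ and $\tilde{\cI}_\lambda:=\pi^{-1}(\cI_\lambda)$, observe $\cK(\tilde{\cT})=\cK(\cT)$ so the lifted $\tilde h_j$ are still bounded, use the quotient transfer lemma (your (ii)$\Leftrightarrow$(iii)) in the form $\R_d[\ux]\to\R_d[\ux]/\cJ=\cA$ to carry the hypothesis on $\cT_\lambda$ up to $(\tilde{\cT})_\lambda$ in $\R_d[\ux]$, invoke the fibre theorem for $\R_d[\ux]$ from \cite{schm2003} as a black box to get (SMP)/(MP) for $\tilde{\cT}$, and then transfer back down to $\cT$ in $\cA$ by the same lemma. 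Your route is self-contained and would give an independent proof in the general setting; the paper's route is shorter and modular, trading the analytic work for two applications of the quotient transfer and one citation.
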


 Proposition \ref{quotientsmpmp}(i) gives
the  implication (i)$\to$(ii),  Proposition \ref{quotientsmpmp}(ii) yields   the equivalences (ii)$\leftrightarrow$(iii) and (ii)$^\prime\leftrightarrow$(iii)$^\prime$, and  Proposition \ref{quotientsmpmp}(iii) implies equivalence (ii)$\leftrightarrow$(ii)$^\prime$.

The  main assertion of Theorem \ref{fibreth1} is the implication (ii)$\to$(i). Its proof is lengthy and technically involved. In the proof given below we reduce the general case to the case $\R_d[\ux]$.

\begin{prop}\label{quotientsmpmp}
Let $\cI$ be an ideal and $\cT$  a finitely generated preordering  of $\cA$. Let $\hat{\cI}$ be the ideal of all $f\in \cA$ which vanish on the zero set $\cZ(\cI)$ of $\cI$.\\
(i) If\,  $\cT$ satisfies (SMP) (resp. (MP)) in $\cA$,  so does   $\cT+\cI$.\\
(ii) $\cT+\cI$ satisfies (SMP) (resp. (MP)) in $\cA$ if and only if
$(\cT+\cI)/\cI$ does in $\cA/\cI$.\\
(iii)  $\cT+\cI$ obeys (SMP) (resp. (MP)) in $\cA$ if and only if $\cT+\hat{\cI}$ does.
\end{prop}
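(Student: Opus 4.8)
The plan is to handle the three parts essentially in sequence, reducing (ii) and (iii) to bookkeeping about measures and zero sets, while part (i) carries the only real analytic content. For part (i), suppose $\cT$ satisfies (SMP) (resp.\ (MP)) in $\cA$ and let $L$ be a $(\cT+\cI)$-positive linear functional on $\cA$. Since $\sum\cA^2\subseteq\cT\subseteq\cT+\cI$, $L$ is in particular $\cT$-positive, so by hypothesis there is a positive Borel measure $\mu\in\cM(\hat\cA)$ representing $L$ (with $\operatorname{supp}\mu\subseteq\cK(\cT)$ in the SMP case). The key point is to show $\operatorname{supp}\mu\subseteq\cZ(\cI)$. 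For this I would take any $g\in\cI$; then $g\in\cI\subseteq\cT+\cI$ and $-g^2\cdot 1 = $ (something)\,---\,more precisely, observe that $g$ and $-g$ lie in the ideal $\cI$, hence $\pm g\in\cT+\cI$, so $L(g)\ge 0$ and $L(-g)\ge 0$, giving $L(g)=0$; but more usefully, $g^2\in\cI$ and also $-g^2 = (\text{square})\cdot(-1)$ is not obviously in $\cT+\cI$, so instead I use that for any $f\in\cT$ and $g\in\cI$ we have $fg^2\in\cT+\cI$ wait\,---\,cleaner: since $g^2\in\cI$ and $g^2\in\sum\cA^2\subseteq\cT$, both $g^2$ and, for each $h\in\cT$, $hg^2\in\cT\cdot\cI\subseteq\cI$, hence $\pm g^2\in\cT+\cI$, so $\int g^2\,d\mu = L(g^2)=0$, forcing $g=0$ $\mu$-a.e. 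Running over a finite generating set of $\cI$ (here using that $\cA$, being finitely generated and Noetherian, has every ideal finitely generated) and intersecting the corresponding null sets, we get $\operatorname{supp}\mu\subseteq\bigcap_j\{g_j=0\}=\cZ(\cI)$. Thus $\mu$ is supported on $\cZ(\cI)\subseteq\hat\cA$, and since $\cZ(\cI)\subseteq\cK(\cT+\cI)$ automatically in the SMP case, $\mu$ witnesses that $L$ is a (resp.\ $\cK(\cT+\cI)$-) moment functional. Hence $\cT+\cI$ has (SMP) (resp.\ (MP)).

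For part (ii), the correspondence is purely formal. A linear functional on $\cA/\cI$ pulls back under the canonical surjection $\pi_\cI:\cA\to\cA/\cI$ to a linear functional on $\cA$ that annihilates $\cI$, and conversely a linear functional on $\cA$ vanishing on $\cI$ descends to $\cA/\cI$; moreover $L$ is $(\cT+\cI)$-positive on $\cA$ iff it vanishes on $\cI$ (because $\pm\cI\subseteq\cT+\cI$, as above) and the induced functional is $(\cT+\cI)/\cI$-positive. On the measure side one uses that $\widehat{\cA/\cI}$ is canonically the closed subvariety $\cZ(\cI)$ of $\hat\cA$, and a positive measure on $\cZ(\cI)$ is the same thing as a positive measure on $\hat\cA$ supported in $\cZ(\cI)$; integration against it of $f\in\cA$ equals integration of $\pi_\cI(f)$. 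Matching up these two translations, and noting $\cK((\cT+\cI)/\cI)=\cK(\cT+\cI)\cap\cZ(\cI)=\cK(\cT+\cI)$ inside $\cZ(\cI)$, gives the equivalence for both (SMP) and (MP). I would present this as a short dictionary rather than spelling out every verification.

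For part (iii), since $\cI\subseteq\hat\cI$ and $\cZ(\cI)=\cZ(\hat\cI)$, we have $\cT+\cI\subseteq\cT+\hat\cI$, and the first thing to check is that these two preorderings have exactly the same positive functionals: if $L$ is $(\cT+\cI)$-positive then, as shown in part (i), any representing measure $\mu$ is supported on $\cZ(\cI)=\cZ(\hat\cI)$, so $L$ annihilates $\hat\cI$ and is therefore $(\cT+\hat\cI)$-positive; the reverse inclusion of positivity is trivial. However, (SMP)/(MP) is a statement not just about which functionals are positive but about representing \emph{those same} functionals by measures, and since the class of functionals coincides and the notion of moment functional on $\cA$ is intrinsic to $\cA$, the two properties are literally the same statement once one observes $\cK(\cT+\cI)=\cK(\cT+\hat\cI)$ (both equal $\cK(\cT)\cap\cZ(\cI)$, using that points of $\hat\cA$ are characters and every character vanishing on $\cI$ vanishes on $\hat\cI$ by definition of $\hat\cI$). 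So part (iii) reduces to the observation that passing from $\cI$ to $\hat\cI$ changes neither the cone of positive functionals nor the relevant semialgebraic set.

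The main obstacle is the support argument in part (i): everything hinges on the implication ``$L((\cT+\cI)$-positive$)\Rightarrow\operatorname{supp}\mu\subseteq\cZ(\cI)$'', and one must be a little careful that the only algebraic input needed is $\pm g^2\in\cT+\cI$ for $g\in\cI$ (which follows from $g^2\in\cI$ and $g^2\in\sum\cA^2\subseteq\cT$, hence $-g^2=g^2-2g^2$ with $g^2\in\cT$ and $-2g^2\in\cI$, so indeed $-g^2\in\cT+\cI$) together with finite generation of $\cI$ to reduce an a.e.\ statement for infinitely many $g$ to finitely many. Parts (ii) and (iii) are then formal consequences once the right dictionary between functionals/measures on $\cA$ and on the quotient is set up.
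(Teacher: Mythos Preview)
Your arguments for parts (i) and (ii) are sound and essentially what the paper does (the paper simply cites Scheiderer for these). The support argument in (i), via $L(g^2)=0$ for $g\in\cI$ and Noetherianity, is exactly the standard one.

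Part (iii), however, has a genuine gap. You claim that the $(\cT+\cI)$-positive and the $(\cT+\hat\cI)$-positive functionals coincide, and for the nontrivial inclusion you argue: ``if $L$ is $(\cT+\cI)$-positive then, as shown in part (i), any representing measure $\mu$ is supported on $\cZ(\cI)=\cZ(\hat\cI)$, so $L$ annihilates $\hat\cI$.'' But part (i) does \emph{not} furnish a representing measure for an arbitrary $(\cT+\cI)$-positive functional; it only does so under the hypothesis that $\cT$ itself has (SMP)/(MP), which is nowhere assumed in (iii). In the hard direction of (iii) (assume $\cT+\hat\cI$ has the property, deduce it for $\cT+\cI$) you are handed a $(\cT+\cI)$-positive $L$ with no representing measure in sight, and you must show $L$ vanishes on $\hat\cI$ \emph{before} you can invoke the hypothesis on $\cT+\hat\cI$. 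Your argument is circular at exactly this point.

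The paper closes this gap algebraically, without any measure. Lifting to $\R_d[\ux]$ via $\cA\cong\R_d[\ux]/\cJ$, one shows that for $f\in\hat\cI$ the polynomial lift $\tilde f$ vanishes on $\cZ(\tilde\cI)$ where $\tilde\cI=\pi^{-1}(\cI)$. The \emph{real Nullstellensatz} then gives $m\in\N$ and $g\in\sum\R_d[\ux]^2$ with $\tilde f^{\,2m}+g\in\tilde\cI$; pushing down, $f^{2^k}+\pi(g)\in\cI$ with $\pi(g)\in\sum\cA^2$ (after adjusting the exponent). Since $L$ annihilates $\cI$ and is nonnegative on squares, $L(f^{2^k})=0$, and iterated Cauchy--Schwarz forces $L(f)=0$. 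Thus $L$ kills $\hat\cI$ and is $(\cT+\hat\cI)$-positive. This is the missing idea in your sketch; without it, (iii) does not go through.
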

\begin{proof}
(i): See e.g. \cite{scheiderer}, Proposition 4.8.

(ii) See e.g. \cite{scheiderer},  Lemma 4.7.

(iii): It suffices to show that both preorderings $\cT+\cI$ and $\cT+\hat{\cI}$ have the same positive characters and the same positive linear functionals on $\cA$. For the sets of characters, using the equality $\cZ(\cI)=\cZ(\hat{\cI})$ we obtain
\begin{align*}
\cK(\cT+\cI)=\cK(\cT)\cap \cZ(\cI)=\cK(\cT)\cap \cZ(\hat{\cI})=\cK(\cT+\hat{\cI}).
\end{align*}
Since $\cT+\cI\subseteq \cT+\hat{\cI}$, a $(\cT+\hat{\cI})$-positive functional is trivially $(\cT+\cI)$-positive.

Conversely, let $L$ be a\, $(\cT+\cI)$-positive linear functional on $\cA$. Let  $f\in \hat{\cI}$ and take a polynomial $\tilde{f}\in \R_d[\ux]$ such that $\pi(\tilde{f})=f$.   Recall that  $\cA\cong \R_d[\ux]/ \cJ$ and $\cZ(\cI)\subseteq \hat{\cA}=\cZ(\cJ)$ by (\ref{Ahatvariety}). Then $\tilde{\cI}:=\pi^{-1}(\cI)$ is an ideal of $\R_d[\ux]$.

Let $x\in \cZ(\tilde{\cI}) (\subseteq \R^d).$ Clearly, $\cJ \subseteq \tilde{\cI}$ and hence  $\cZ(\tilde{\cI})\subseteq \cZ(\cJ)=\hat{\cA}$. Let $g\in \cI$ and choose  $\tilde{g}\in \tilde{\cI}$ such that $g=\pi(\tilde{g})$. Since $x$ annhilates $\cJ$ and $ x\in \cZ(\tilde{\cI}),$  we have $g(x)=\tilde{g}(x)=0.$ That is, $x\in \cZ(\cI)$. From the equality $\cZ(\cI)=\cZ(\hat{\cI})$  we obtain  $g(x)=0$ for $g\in \hat{\cI}$. Thus, since $f\in \hat{\cI},$  it follows that $f(x)=\tilde{f}(x)=0$. That is,
the polynomial $\tilde{f}$ vanishes on $\cZ(\tilde{\cI})$.

Conversely, let $L$ be a\, $(\cT+\cI)$-positive linear functional on $\cA$.   Recall that  $\cA\cong \R_d[\ux]/ \cJ$ and $ \hat{\cA}=\cZ(\cJ)$ by (\ref{Ahatvariety}). Then $\tilde{\cI}:=\pi^{-1}(\cI)$ is an ideal of $\R_d[\ux]$.

We verify that $\cZ(\tilde{\cI})\subseteq \cZ(\hat{\cI}).$
Let $x\in \cZ(\tilde{\cI}) (\subseteq \R^d).$ Clearly, $\cJ \subseteq \tilde{\cI}$ and hence  $\cZ(\tilde{\cI})\subseteq \cZ(\cJ)=\hat{\cA}$. Let $g\in \cI$ and choose  $\tilde{g}\in \tilde{\cI}$ such that $g=\pi(\tilde{g})$. Since $x$ annhilates $\cJ$ and $ x\in \cZ(\tilde{\cI}),$  we have $g(x)=\tilde{g}(x)=0.$ That is, $x\in \cZ(\cI)=\cZ(\hat{\cI}).$

Now let  $f\in \hat{\cI}$. We choose  $\tilde{f}\in \R_d[\ux]$ such that $\pi(\tilde{f})=f$.  Then $f(x)=\tilde{f}(x)$ for
$x\in \cZ(\cJ)=\hat{\cA}$. Hence, since $f$ vanishes on  $\cZ(\hat{\cI})$ and $\cZ(\tilde{\cI})\subseteq \cZ(\hat{\cI}),$
the polynomial $\tilde{f}$ vanishes on $\cZ(\tilde{\cI})$.
Therefore,
by the real Nullstellensatz \cite[Theorem 2.2.1, (3)]{marshall}, there are $m\in \N$ and $g\in \sum \R_d[\ux]^2$ such that $p:=(\tilde{f})^{2m}+g\in \tilde{\cI}$. Upon multiplying $p$ by some  even power of $\tilde{f}$ we can assume that $2m=2^k$ for some $k\in\N$. Then
\begin{align}\label{cprimepib}
\pi(p)=f^{2^k}+\pi(g)\in \cI ,\quad {\rm  where}\quad \pi(g)\in \sum \cA^2.
\end{align}
Being $(\cT+\cI)$-positive, $L$  annihilates $\cI$ and  is nonnegative on $\sum \cA^2$. Hence, by (\ref{cprimepib}), we obtain
$$
0=L(p)=L\big(f^{2^k}\big)+ L(\pi(g)),~~ L(\pi(g))\geq 0,~~ L\big(f^{2^k}\big)\geq 0.
$$
This implies that $L(f^{2^k})=0$.
Since $L$ is nonnegative on $\sum \cA^2$, the Cauchy-Schwarz inequality holds. By a repeated application of this inequality we derive
\begin{align*}
|L(f)|^{2^k}&\leq L(f^2)^{2^{k-1}} L(1)^{2^{k-1}}\leq L(f^4)^{2^{k-2}} L(1)^{2^{k-2}+2^{k-1}}\leq \dots \\&\leq L(f^{2^k}) L(1)^{1+\dots+2^{k-1}} =0.
\end{align*}
Therefore, $L(f)=0$.  That is, $L$ annihilates $\hat{\cI}$. Hence $L$ is $(\cT+\hat{\cI})$-positive. This  completes the proof of (iii).
\end{proof}
\noindent
{\it Proof of the implication (ii)$\to$(i) of Theorem \ref{fibreth1}:}\\
As noted at the beginning of this section,  $\cA$ is (isomorphic to) the quotient algebra\, $\R_d[\ux]/ \cJ$ and  $\hat{\cA}=\cZ(\cJ)$ by (\ref{Ahatvariety}).
 We  choose polynomials $\tilde{h}_j\in \R_d[\ux]$ such that $\pi(\tilde{h}_j)=h_j$. Then $\tilde{\cT}:=\pi^{-1}(\cT)$ is a  preordering of $\R_d[\ux]$.
Since  $\cJ\subseteq \tilde{\cT}$, each $\tilde{\cT}$-positive character  of $\R_d[\ux]$ annihilates $\cJ$, so it belongs to $\hat{\cA}=\cZ(\cJ)$. This implies that $\cK(\tilde{\cT})=\cK(\cT)$ and  $\tilde{h}_j(x)=h_j(x)$ for $x\in \cK(\cT),$ so that $ {\sf \tilde{ h}}(\cK(\tilde{\cT}))= {\sf h}(\cK(\cT)).$ Since $h_j$ is bounded on $\cK(\cT)$ by (ii), so is\, $\tilde{h}_j$\, on $\cK(\tilde{\cT})$.
 For  $\lambda \in {\sf h}(\cK(\cT)),$  $\tilde{\cI}_\lambda:=\pi^{-1}(\cI_\lambda)$ is an ideal of $\R_d[\ux]$. It is easily verified that
 $(\tilde{\cT})_\lambda= \tilde{\cT}+ \tilde{\cI}_\lambda.$

We will apply  Proposition \ref{quotientsmpmp}(ii) twice to  $\cA=\R_d[\ux]/\cJ$, that is, with $\cA$ replaced by $\R_d[\ux]$ and $\cI$ by $\cJ$ in Proposition \ref{quotientsmpmp}(ii).
Because $\cT_\lambda=((\tilde{\cT})_\lambda +\cJ)/\cJ$ has (SMP) (resp. (MP)) in $\cA=\R_d[\ux]/\cJ$ by assumption (ii), so does $(\tilde{\cT})_\lambda$ in $\R_d[\ux]$ by the if direction of Proposition \ref{quotientsmpmp}(ii). For the algebra $\R_d[\ux]$ the implication (ii)$\to$(i)  of Theorem \ref{fibreth1} was proved in \cite{schm2003}.  Since $(\tilde{\cT})_\lambda= \tilde{\cT}+ \tilde{\cI}_\lambda$ and $\tilde{h}_j$\, is bounded on $\cK(\tilde{\cT})$, this result applies to $\tilde{\cT}$. Hence  $\tilde{\cT}$ obeys (SMP) (resp. (MP)) in $\R_d[\ux]$ and  so does  $\cT=(\tilde{\cT} +\cJ)/\cJ$  in $\cA=\R_d[\ux]/\cJ$ by the only if direction of Proposition \ref{quotientsmpmp}(ii). $\hfill \Box$

\medskip

We state the special case $\cT=\sum \cA^2$ of Proposition \ref{quotientsmpmp}(ii) separately as
\begin{cor}\label{corquotient}
If\, $\cI$ is a ideal of $\cA$, then $\cI+\sum \cA^2$ obeys (MP) (resp. (SMP)) in $\cA$ if and only if\,  $\sum (\cA/\cI)^2$ does in $\cA/\cI$.
\end{cor}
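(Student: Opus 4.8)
The plan is to apply Proposition \ref{quotientsmpmp}(ii) with the preordering $\cT$ taken to be the trivial preordering $\sum\cA^2$. First I would check that this specialization is legitimate: $\sum\cA^2$ is indeed a finitely generated preordering of $\cA$, since $\cA$ is finitely generated as an algebra and commutative, so $\sum\cA^2$ is generated by the finite set $\{1\}$ (equivalently, the empty set of non-trivial generators). Thus the hypotheses of Proposition \ref{quotientsmpmp}(ii) are met.

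Next I would identify the relevant objects under this specialization. With $\cT = \sum\cA^2$, the sum $\cT + \cI$ equals $\cI + \sum\cA^2$, which is exactly the preordering appearing in the statement of the corollary. On the quotient side, $(\cT + \cI)/\cI = (\sum\cA^2 + \cI)/\cI$ coincides with $\sum(\cA/\cI)^2$: indeed the canonical map $\pi_\cI\colon\cA\to\cA/\cI$ is a surjective algebra homomorphism, so it carries $\sum\cA^2$ onto $\sum(\cA/\cI)^2$, and adding $\cI$ before passing to the quotient does not change the image. Hence the two preorderings whose (MP)/(SMP) properties are compared in Proposition \ref{quotientsmpmp}(ii) become precisely $\cI + \sum\cA^2$ in $\cA$ and $\sum(\cA/\cI)^2$ in $\cA/\cI$.

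Finally, Proposition \ref{quotientsmpmp}(ii) states that $\cT + \cI$ has (SMP) (resp.\ (MP)) in $\cA$ if and only if $(\cT + \cI)/\cI$ has it in $\cA/\cI$. Substituting the identifications from the previous paragraph yields exactly the claim: $\cI + \sum\cA^2$ obeys (MP) (resp.\ (SMP)) in $\cA$ if and only if $\sum(\cA/\cI)^2$ does in $\cA/\cI$. There is essentially no obstacle here; the only point requiring a line of justification is the elementary identity $(\sum\cA^2 + \cI)/\cI = \sum(\cA/\cI)^2$, which follows from surjectivity of the quotient map. So the proof is a one-paragraph deduction, and I would write it as: "This is the special case $\cT = \sum\cA^2$ of Proposition \ref{quotientsmpmp}(ii), using that the quotient map $\cA\to\cA/\cI$ sends $\sum\cA^2$ onto $\sum(\cA/\cI)^2$."
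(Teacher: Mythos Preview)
Your proposal is correct and matches the paper's approach exactly: the paper states this corollary simply as ``the special case $\cT=\sum \cA^2$ of Proposition \ref{quotientsmpmp}(ii),'' which is precisely what you do, including the identification $(\sum\cA^2+\cI)/\cI=\sum(\cA/\cI)^2$.
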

Our applications to the rational moment problem in Section \ref{rationalmp} are based on  the following corollary.

\begin{cor}\label{corquotient2}
Let us retain the notation of Theorem \ref{fibreth1}. Suppose that for
 each\, $\lambda \in {\sf h}(\cK(\cT))$ there exist a finitely generated real unital algebra $\cB_\lambda$ and a surjective algebra homomorphism $\rho_\lambda:\cB_\lambda\to \cA/\cI_\lambda$ (or\, $\rho_\lambda:\cB_\lambda\to \cA/\hat{\cI}_\lambda$) such that  $\sum \cB_\lambda^2 $ obeys (MP) in $\cB_\lambda.$
Then $\cT$ has (MP) in $\cA$.
\end{cor}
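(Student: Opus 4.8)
The strategy is to reduce everything to the fibre theorem, Theorem \ref{fibreth1}, specifically the equivalence (i)$\leftrightarrow$(iii) (or (i)$\leftrightarrow$(iii)$'$). So the goal is to verify hypothesis (iii): that $\cT_\lambda/\cI_\lambda$ has (MP) in $\cA/\cI_\lambda$ for every $\lambda \in {\sf h}(\cK(\cT))$ (and likewise the primed version with $\hat{\cI}_\lambda$). Once this is in hand, Theorem \ref{fibreth1} immediately gives that $\cT$ has (MP) in $\cA$.

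To verify (iii), fix $\lambda \in {\sf h}(\cK(\cT))$ and consider the surjective homomorphism $\rho_\lambda:\cB_\lambda \to \cA/\cI_\lambda$. The key observation is that (MP) is inherited along surjective homomorphisms: if $\sum \cB_\lambda^2$ has (MP) in $\cB_\lambda$, then $\sum(\cA/\cI_\lambda)^2$ has (MP) in $\cA/\cI_\lambda$. Indeed, let $K$ be a $\sum(\cA/\cI_\lambda)^2$-positive functional on $\cA/\cI_\lambda$; then $K\circ\rho_\lambda$ is a positive functional on $\cB_\lambda$ (since $\rho_\lambda$ maps squares to squares), hence by hypothesis there is a measure $\nu$ on $\hat{\cB}_\lambda$ representing it. Now $\rho_\lambda$ induces a continuous embedding $\rho_\lambda^*:\widehat{\cA/\cI_\lambda}\hookrightarrow\hat{\cB}_\lambda$ whose image is the closed subvariety cut out by $\ker\rho_\lambda$; since $K\circ\rho_\lambda$ annihilates $\ker\rho_\lambda$ and, more to the point, annihilates its real radical, a standard argument (the same Cauchy--Schwarz/real-Nullstellensatz device already used in the proof of Proposition \ref{quotientsmpmp}(iii)) shows $\nu$ is supported on that image, so $\nu$ pushes forward to a representing measure for $K$ on $\widehat{\cA/\cI_\lambda}$. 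Thus $\sum(\cA/\cI_\lambda)^2$ has (MP). Since $\sum(\cA/\cI_\lambda)^2 \subseteq \cT_\lambda/\cI_\lambda$, and (MP) for a smaller preordering forces (MP) for any larger one (a larger preordering has fewer positive functionals), $\cT_\lambda/\cI_\lambda$ has (MP) in $\cA/\cI_\lambda$ as well. The case where $\rho_\lambda$ maps onto $\cA/\hat{\cI}_\lambda$ is handled identically, yielding hypothesis (iii)$'$ of Theorem \ref{fibreth1}, which by Proposition \ref{quotientsmpmp}(iii) is equivalent.

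Having verified (iii) (resp. (iii)$'$) for all $\lambda \in {\sf h}(\cK(\cT))$, the implication (iii)$\to$(i) of Theorem \ref{fibreth1} (which is the content of the chain (iii)$\leftrightarrow$(ii)$\to$(i) via Proposition \ref{quotientsmpmp}) gives that $\cT$ has (MP) in $\cA$, completing the proof. I expect the main obstacle to be the support argument for the pushforward measure: one must confirm that $K\circ\rho_\lambda$ annihilates not merely $\ker\rho_\lambda$ but the ideal of all polynomials vanishing on $\cZ(\ker\rho_\lambda)$, so that the representing measure $\nu$ genuinely lives on the image variety $\rho_\lambda^*(\widehat{\cA/\cI_\lambda})$ and not on a larger set. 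This is exactly the point where positivity of $K$ (hence the Cauchy--Schwarz inequality) combined with the real Nullstellensatz is needed, precisely as in the final part of the proof of Proposition \ref{quotientsmpmp}(iii); everything else is routine functoriality of the constructions.
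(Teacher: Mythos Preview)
Your proposal is correct and follows the same skeleton as the paper: verify condition (iii) of Theorem~\ref{fibreth1} by showing that $\sum(\cA/\cI_\lambda)^2$ inherits (MP) from $\cB_\lambda$ along the surjection $\rho_\lambda$, then use $\sum(\cA/\cI_\lambda)^2\subseteq\cT_\lambda/\cI_\lambda$ and invoke (iii)$\to$(i).

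The only difference is in how the descent step is carried out. The paper does not argue directly with supports of measures; instead it observes that since $\sum\cB_\lambda^2$ has (MP), so does $\cJ^\lambda+\sum\cB_\lambda^2$ (trivially, a larger preordering), and then applies Corollary~\ref{corquotient} (i.e.\ Proposition~\ref{quotientsmpmp}(ii) with $\cT=\sum\cB_\lambda^2$) to pass to the quotient $\cB_\lambda/\cJ^\lambda\cong\cA/\cI_\lambda$. This packages your support argument into a lemma already proved. Incidentally, the obstacle you anticipate is milder than you suggest: since $K\circ\rho_\lambda$ annihilates $\ker\rho_\lambda$ \emph{exactly} (not merely an ideal contained in it), for any $f\in\ker\rho_\lambda$ one has $f^2\in\ker\rho_\lambda$ and hence $\int f^2\,d\nu=0$, so $\nu$ is supported on $\cZ(\ker\rho_\lambda)$ without any appeal to the real Nullstellensatz.
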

\begin{proof}
Let $\lambda \in {\sf h}(\cK(\cT))$. We denote by
$\cJ^\lambda$  the kernel of the homomorphism $\rho_\lambda:\cB_\lambda\to \cA/\cI_\lambda$. Since
$\sum \cB_\lambda^2$ has (MP) in $\cB_\lambda$, it is obvious that  $\cJ^\lambda+ \sum \cB_\lambda^2$ has (MP) in $\cB_\lambda$. Therefore $\sum  (\cB_\lambda /\cJ^\lambda)^2$ satisfies (MP) in  $\cB_\lambda /\cJ^\lambda$ by Corollary \ref{corquotient}. Since the algebra homomorphism $\rho_\lambda$ is surjective,  the  algebra $\cB_\lambda /\cJ^\lambda$ is isomorphic to $\cA/\cI_\lambda$. Hence
$\sum (\cA/\cI_\lambda)^2$ has (MP) in $\cA/\cI_\lambda$ as well. Consequently, since $ \sum (\cA/\cI_\lambda)^2\subseteq \cT/\cI_\lambda  $, $\cT/\cI_\lambda$  obeys (MP) in $\cA/\cI_\lambda$. Then  $\cT$ has (MP)  by Theorem \ref{fibreth1},(iii)$\to$(i).

The proof under the assumption   $\rho_\lambda:\cB_\lambda\to \cA/\hat{\cI}_\lambda$ is almost the same; in this case the implication  (iii)$^\prime\to$(i) of Theorem \ref{fibreth1} is used.
\end{proof}
Theorem \ref{fibreth1} is  formulated for
a commutative unital {\it real} algebra $\cA$. In Sections \ref{applcmp} and \ref{twosdidecmp} we are concerned with commutative {\it complex} semigroup $*$-algebras.
This case can be easily reduced to Theorem \ref{fibreth1} as we discuss in what follows.

If $\cB$ is a  commutative complex $*$-algebra, its hermitian part $$\cA=\cB_h:=\{ b\in \cB:b=b^*\}$$ is a commutative real algebra.

Conversely, suppose that $\cA$ is a commutative real algebra. Then
its complexification $\cB:=\cA+ i\cA$  is a commutative  complex $*$-algebra with involution $(a_1+i a_2)^*:=a_1-i a_2$ and  scalar multiplication $$(\alpha+i\beta)(a_1+i a_2):= \alpha a_1-\beta a_2 + i (\alpha a_2 +\beta a_1),~~  \alpha,\beta \in \R,~a_1,a_2\in \cA,$$  and  $\cA$ is the hermitian part $\cB_h$ of $\cB$.  Let $b\in \cB$. Then we have $b=a_1+i a_2$ with $a_1,a_2\in \cA$ and since $\cA$  is commutative, we get
\begin{align}\label{sumba^2}
 b^*b=(a_1-i a_2)(a_1+i a_2)=a_1^2+a_2^2 +i(a_1a_2-a_2a_1) =a_1^2+a_2^2.
\end{align}
Hence, if $\cT$ is a preordering of $\cA$, then\, $b^*b\,\cT\subseteq \cT$\, for $b\in \cB$. In particular,  $\sum \cB^2=\sum \cA^2$.

Further, each $\R$-linear functional $L$ on $\cA$ has a unique extension $\tilde{L}$ to a $\C$-linear functional  on $\cB$. By (\ref{sumba^2}), $L$ is nonnegative on $\sum\cA^2$ if and only of  $\tilde{L}$ is nonnegative on $\sum \cB^2$, that is, $\tilde{L}$ is a positive linear functional on $\cB$.

\section{Rational moment problems in $\R^d$}\label{rationalmp}
We begin with some notation and some preliminaries to state the main results. For $q\in \R_d[\ux]$ and a subset $\cD\subseteq \R_d[\ux]$ we put
$$
\cZ(q):=\{x\in \R^d:q(x)=0\},\quad \cZ_\cD:=\cup_{q\in \cD}\, \cZ(q).
$$
Let   ${\bf{D}}(\R_d[\ux])$ denote the family of  all multiplicative subsets $\cD$  of $\R_d[\ux]$ such that $1\in \cD$ and $0\notin\cD$.
The real polynomials in a single variable $y$ are denoted by $\R[y]$.

Let $\{f_1,\dots,f_k\}$ be a  $k$-tuple of polynomials of $\R_d[\ux]$
and  $\cD \in{\bf{D}}(\R_d[\ux])$. Then $$\cA:=\cD^{-1}\R_d[\ux]$$ is a real  unital algebra which contains $\R_d[\ux]$ as a subalgebra.
Let   $\cT$ be the preordering of $\cA$ generated by $f_1,\dots,f_k$.
Further, we fix an  $m$-tuple\, ${\sf h}=\{h_1,\dots,h_m\}$\,  of  elements $h_j\in \cA$. For $\lambda \in \R^d$ let $\cI_\lambda$ be the ideal of $\cA$ generated by $h_j-\lambda_j\cdot 1$, $j=1,\dots,m.$ Recall that  ${\sf h}(\cK(\cT))$   is defined by (\ref{defkKT}).

\smallskip

We consider the following assumptions:
\smallskip

(i) {\it The functions $h_1,\dots,h_m\in \cA$ are bounded on the set ${\sf h}(\cK(\cT)$.}

(ii) {\it For all $\lambda \in {\sf h}(\cK(\cT))$ there are a finitely generated  set\, $\cE_{\lambda}  \in{\bf{D}}(\R[y])$,  a finite commutative unital real algebra $\cC_\lambda$,  and a surjective homomorphism
$$\rho_\lambda: \cE_{\lambda}^{-1}\R[y]\otimes \cC_\lambda\to \cD^{-1}\R[\ux]/\cI_\lambda\equiv\cA/\cI_\lambda.$$}
The following two theorems are the  main results of this section.
\begin{thm}\label{rationalmp2}
Suppose that the multiplicative set $\cD \in{\bf{D}}(\R_d[\ux])$ is finitely generated and assume  (i) and (ii).
Then the (finitely generated) real algebra $\cA$ obeys  (MP). For each $\cT$--positive linear functional $L$ on $\cA$ there is a positive regular Borel measure $\mu$ on $\hat{\cA}\cong \R^d\backslash\cZ_\cD$
such that $L(f)=\int_{\hat{\cA}} f \, d\mu$ for all $f\in \cA$.
\end{thm}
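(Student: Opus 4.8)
The plan is to verify that Theorem \ref{rationalmp2} is an immediate consequence of Corollary \ref{corquotient2} once we check the hypotheses of that corollary. First I would observe that since $\cD$ is finitely generated and $\{f_1,\dots,f_k\}$ is finite, the algebra $\cA = \cD^{-1}\R_d[\ux]$ is finitely generated (adjoin the generators of $\R_d[\ux]$ together with the inverses of the generators of $\cD$), and the preordering $\cT$ generated by $f_1,\dots,f_k$ is a finitely generated preordering of $\cA$; thus the standing hypotheses of Theorem \ref{fibreth1} are met. Assumption (i) says precisely that $h_1,\dots,h_m$ are bounded on $\cK(\cT)$, which is the remaining hypothesis needed to invoke the fibre theorem machinery for the tuple ${\sf h} = (h_1,\dots,h_m)$.

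Next I would produce, for each $\lambda \in {\sf h}(\cK(\cT))$, the algebra $\cB_\lambda$ and surjection $\rho_\lambda : \cB_\lambda \to \cA/\cI_\lambda$ required by Corollary \ref{corquotient2}. The natural choice is $\cB_\lambda := \cE_\lambda^{-1}\R[y] \otimes \cC_\lambda$, with $\rho_\lambda$ the homomorphism supplied by assumption (ii). This $\cB_\lambda$ is a finitely generated real unital algebra: $\cE_\lambda^{-1}\R[y]$ is finitely generated because $\cE_\lambda$ is finitely generated, $\cC_\lambda$ is finite hence finitely generated, and a tensor product of two finitely generated commutative unital algebras is finitely generated. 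So the one substantive point to check is that $\sum \cB_\lambda^2$ has (MP) in $\cB_\lambda$. I expect this to be the heart of the argument and it should be isolated as a lemma: one shows that $\sum\big(\cE_\lambda^{-1}\R[y] \otimes \cC_\lambda\big)^2$ has (MP). The idea is that $\cE_\lambda^{-1}\R[y]$, a localization of the one-variable polynomial ring, has (MP) for its cone of squares — this is essentially the solvability of the one-dimensional (rational) moment problem, where positivity of a functional forces representability by a measure on $\R$ minus the zero set of $\cE_\lambda$ — while tensoring with a finite-dimensional algebra $\cC_\lambda$ only contributes finitely many points (the real characters of $\cC_\lambda$, together with possible nilpotent behaviour that a positive functional must kill); one then glues the one-dimensional measures over these finitely many fibres. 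This tensor/finite-extension stability of (MP) for sums of squares is where the main work lies, and I would either cite it from the surrounding theory or prove it directly using the classical Hamburger result plus a decomposition over $\widehat{\cC_\lambda}$.

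Granting that lemma, Corollary \ref{corquotient2} (in its $\rho_\lambda : \cB_\lambda \to \cA/\cI_\lambda$ form) applies verbatim and yields that $\cT$ has (MP) in $\cA$. It remains only to identify the support of the representing measure. Since $\cA = \cD^{-1}\R_d[\ux]$, its character space $\hat\cA$ is the set of points $x \in \R^d$ at which every element of $\cD$ is invertible, i.e. $\hat\cA \cong \R^d \setminus \cZ_\cD$ as a real variety (using $\cZ_\cD = \cup_{q\in\cD}\cZ(q)$); the measure $\mu$ from (MP) lives on $\hat\cA$ by Definition \ref{definionsmpmp}, and by inner regularity of Borel measures on this locally compact space it can be taken regular. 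This gives the representation $L(f) = \int_{\hat\cA} f\, d\mu$ for all $f \in \cA$ and completes the proof.

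\medskip

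The main obstacle, as indicated, is the (MP) property of $\sum \cB_\lambda^2$ in $\cB_\lambda = \cE_\lambda^{-1}\R[y]\otimes\cC_\lambda$: reducing it cleanly to the one-variable rational moment problem and handling the finite algebra factor (including the nilpotent radical of $\cC_\lambda$) requires care, whereas everything else is bookkeeping on top of Corollary \ref{corquotient2} and Theorem \ref{fibreth1}.
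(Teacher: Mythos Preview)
Your proposal is correct and follows essentially the same route as the paper: set $\cB_\lambda = \cE_\lambda^{-1}\R[y]\otimes\cC_\lambda$, invoke the lemma that $\sum\cB_\lambda^2$ has (MP) (this is Corollary~\ref{corpropmp}, proved from Proposition~\ref{porprationad1} via exactly the decomposition over $\widehat{\cC_\lambda}$ and annihilation of the ideal $\cI$ that you anticipate), then apply Corollary~\ref{corquotient2}, and read off $\hat\cA\cong\R^d\setminus\cZ_\cD$ from Lemma~\ref{hataddecription}. You have also correctly identified where the real work lies.
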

If the set $\cD$ is not finitely generated,  a number of  technical  difficulties appear: In general the algebra $\cA$ is no longer finitely generated and the character space\, $\hat{\cA}$\, is not locally  compact in the corresponding weak topology.  Recall that the fibre theorem  requires a finitely generated algebra, because it is based on the Krivine-Stengle Positivstellensatz. However, circumventing these technical problems  we have  the following general result concerning the multidimensional rational moment problem.
\begin{thm}\label{rationalmp1}
Let $\cD_0 \in{\bf{D}}(\R_d[\ux])$ and
let $\{f_1,\dots,f_k\}$ be a  $k$-tuple of polynomials of\, $\R_d[\ux].$
Suppose that for each finitely generated subset\, $\cD \in{\bf{D}}(\R_d[\ux])$\, of\, $\cD_0$ there exists  an $m$-tuple  ${\sf h}=\{h_1,\dots,h_m\}$\,  of  elements $h_j\in \cA:=\cD^{-1}\R_d[\ux]$  such that $\cD$,   $\cA$, and the preordering $\cT$ of $\cA$ generated by $f_1,\dots,f_k$ satisfy  assumptions (i) and (ii).

Let $\cT_0$ be the preordering of the algebra $\cA_0:=\cD_0^{-1}\R_d[\ux]$ generated by $f_1,\dots,f_k$.
Then for each $\cT_0$--positive linear functional $L_0$ on $\cA_0$ there exists a  regular positive Borel measure $\mu$ on $\R^d$ such that $f\in L^1(\R^d;\mu)$ and
\begin{align}\label{repL_0}
 L_0(f)=\int_{\R^d}  f d\mu\quad {\rm  for~ all}\quad f\in \cA_0.
\end{align}
\end{thm}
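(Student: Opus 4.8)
The idea is to pass from the non-finitely-generated algebra $\cA_0 = \cD_0^{-1}\R_d[\ux]$ to its finitely generated subalgebras, apply Theorem \ref{rationalmp2} on each, and glue the resulting measures by a weak-$*$ compactness (Prokhorov-type) argument. First I would observe that $\cA_0$ is the union (directed inductive limit) of the subalgebras $\cA = \cD^{-1}\R_d[\ux]$ as $\cD$ ranges over the finitely generated submonoids of $\cD_0$; any finite subset of $\cA_0$ lies in some such $\cA$. Given a $\cT_0$-positive functional $L_0$ on $\cA_0$, its restriction $L_\cD := L_0|_{\cA}$ is $\cT$-positive (where $\cT$ is generated by $f_1,\dots,f_k$ inside $\cA$), since $\cT \subseteq \cT_0$. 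By hypothesis $\cD$, $\cA$, $\cT$ satisfy (i) and (ii), so Theorem \ref{rationalmp2} applies and yields a positive regular Borel measure $\mu_\cD$ on $\hat\cA \cong \R^d \setminus \cZ_\cD \subseteq \R^d$ representing $L_\cD$. Extending $\mu_\cD$ by zero, we may regard each $\mu_\cD$ as a regular positive Borel measure on $\R^d$ with $L_0(f) = \int_{\R^d} f\, d\mu_\cD$ for all $f \in \cA \supseteq \R_d[\ux]$; in particular all these measures have the same moments, namely those prescribed by $L_0$ on $\R_d[\ux]$, and all have total mass $L_0(1)$.

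\textbf{Gluing the measures.} The family $\{\mu_\cD\}$, indexed by the directed set of finitely generated $\cD \subseteq \cD_0$, is a net of positive measures on $\R^d$ of bounded (indeed constant) total mass. The key point is to extract a weak-$*$ limit: the positive functionals $f \mapsto \int f\, d\mu_\cD$ on $C_c(\R^d)$ are uniformly bounded, so by Banach--Alaoglu there is a subnet converging weak-$*$ to a positive functional, hence by Riesz representation to a regular positive Borel measure $\mu$ on $\R^d$. One then has to upgrade weak-$*$ convergence against $C_c(\R^d)$ to convergence of the integrals $\int f\, d\mu_\cD \to \int f\, d\mu$ for $f$ a fixed polynomial (or, more generally, a fixed element of $\cA_0$), which is exactly the subtle point: polynomials are unbounded, so one needs a tightness/uniform-integrability input. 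Here I would exploit that the measures $\mu_\cD$ all share the same polynomial moments: for a fixed $g \in \R_d[\ux]$ and any $N$, $\int_{\{|g| > N\}} |g|\, d\mu_\cD \le N^{-1}\int g^2\, d\mu_\cD = N^{-1} L_0(g^2)$, a bound independent of $\cD$. This uniform control forces the net $\{\mu_\cD\}$ to be tight and the polynomial integrals to pass to the limit, giving $\int_{\R^d} g\, d\mu = L_0(g)$ for all $g \in \R_d[\ux]$, and $\mu$ finite with $\mu(\R^d) = L_0(1)$.

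\textbf{From polynomials to all of $\cA_0$.} It remains to check $L_0(f) = \int_{\R^d} f\, d\mu$ for an arbitrary $f \in \cA_0$, together with $f \in L^1(\R^d;\mu)$. Write $f = g/q$ with $g \in \R_d[\ux]$ and $q$ a product of generators of $\cD_0$. Since $f \in \cA$ for every finitely generated $\cD$ containing $q$, each $\mu_\cD$ (for such $\cD$) integrates $f$ with $\int f\, d\mu_\cD = L_0(f)$; moreover applying the previous tightness argument with $f^2 = g^2/q^2 \in \cA_0$ (so $f^2$ also has a finite, $\cD$-independent value $L_0(f^2)$ under each appropriate $\mu_\cD$) controls $\int_{\{|f|>N\}} |f|\, d\mu_\cD$ uniformly. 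Passing to the limit gives $f \in L^1(\R^d;\mu)$ and $\int_{\R^d} f\, d\mu = L_0(f)$, which is \eqref{repL_0}. One should note that $\mu$ is automatically supported off $\cZ_{\cD_0}$ in the sense that every $q \in \cD_0$ is $\mu$-a.e.\ nonzero, because $\int (1/q^2)\, d\mu = L_0(1/q^2) < \infty$ forces $\mu(\cZ(q)) = 0$; this is what makes the integrals of elements of $\cA_0$ well defined.

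\textbf{Main obstacle.} The genuinely delicate step is the passage to the limit for unbounded integrands — i.e.\ establishing tightness of the net $\{\mu_\cD\}$ and promoting weak-$*$ convergence on $C_c(\R^d)$ to convergence of polynomial (and rational) moments, ensuring no mass escapes to infinity. The uniform second-moment bounds coming from the single functional $L_0$ are the right tool, but one must be careful that the chosen convergent subnet simultaneously controls the countably many "test" elements ($g$, $f$, and their squares) needed; a diagonal argument over a countable cofinal exhaustion of $\cD_0$, combined with the fact that $\R_d[\ux]$ and any given $f$ are countably generated, handles this. Everything else is bookkeeping with directed limits and the already-proved Theorem \ref{rationalmp2}.
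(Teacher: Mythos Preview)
Your proposal is correct and follows essentially the same route as the paper: restrict $L_0$ to each finitely generated $\cA_\cD$, apply Theorem~\ref{rationalmp2} to obtain representing measures $\mu_\cD$ of common total mass $L_0(1)$, extract a vaguely convergent subnet with limit $\mu$, and then upgrade vague convergence to convergence of the integrals $\int f\,d\mu_\cD$ for $f\in\cA_0$. The paper packages the last step into a dedicated lemma (Lemma~\ref{vaguelimit}), using a compactly supported truncation $\eta_\varepsilon$ that vanishes near $\cZ(q)$ and satisfies $g\le g\eta_\varepsilon+\varepsilon[(\|x\|^2+1)g+g^2]$, which plays exactly the role of your uniform-integrability/tightness bounds; your final worry about a diagonal extraction is unnecessary, since a single vaguely convergent subnet suffices for every $f\in\cA_0$ once the uniform $L_0(f^2)$-bounds are in hand.
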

The proofs of these theorems require a number of  preparatory steps.
The following two results for $d=1$ are crucial and they are  of  interest in itself.
\begin{prop}\label{porprationad1}
Suppose that\, $\cE \in{\bf{D}}(\R[y])$. Then   for each positive linear functional $L$ on the real algebra  $\cB:=\cE^{-1}R[y]$ there exists a positive regular Borel measure $\mu$ on $\R$  such that $f\in L^1(\R,\mu)$ and
\begin{align}\label{intrep}
L(f)=\int f(x)\,d\mu(x)\quad {\rm for}\quad f\in \cB.
\end{align}
If $\cE$ is finitely generated, so is the algebra $\cB$ and\, $\sum \cB^2$ satisfies (MP) in $\cB$. In this case, $\cZ_\cE$ is a finite set and $\mu(\cZ_\cE)=0$.
\end{prop}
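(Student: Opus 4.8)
The statement concerns positive functionals on $\cB = \cE^{-1}\R[y]$ for $\cE \in \mathbf{D}(\R[y])$, the localization of the one-variable polynomial ring at a multiplicative set. The strategy is to first reduce to the finitely generated case and then exploit the classical one-dimensional Hamburger/Stieltjes theory together with the structure of rational functions in one variable.

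First I would reduce to the finitely generated subcase. Write $\cE$ as a directed union of finitely generated multiplicative subsets $\cE_\alpha \in \mathbf{D}(\R[y])$ (for instance, generated by finitely many of the irreducible factors appearing in denominators), so that $\cB = \bigcup_\alpha \cB_\alpha$ with $\cB_\alpha = \cE_\alpha^{-1}\R[y]$. A positive functional $L$ on $\cB$ restricts to a positive functional $L_\alpha$ on each $\cB_\alpha$; if each $L_\alpha$ has an integral representation by a regular positive Borel measure $\mu_\alpha$ on $\R$ with $\cB_\alpha \subseteq L^1(\R,\mu_\alpha)$, then one needs a consistency/limiting argument (uniqueness of the representing measure is the key point here, which in the one-variable setting is governed by determinacy) to patch these into a single $\mu$. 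I expect this patching step — going from the finitely generated case to the general case while controlling integrability of \emph{all} elements of $\cB$ — to be the main obstacle, and I would handle it by a weak-$*$ compactness argument on the measures together with the observation that any given $f \in \cB$ already lies in some $\cB_\alpha$, so integrability need only be checked there.

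So the heart of the matter is the finitely generated case. Here $\cE$ is generated by finitely many polynomials; replacing them by their irreducible factors, the denominators that occur are products of finitely many fixed irreducible polynomials $p_1,\dots,p_r \in \R[y]$. The zero set $\cZ_\cE = \bigcup_i \cZ(p_i)$ is then a finite subset of $\R$ (the union of the real roots), and $\cB$ is finitely generated as a real algebra: one can take generators $y$ and $1/p_1,\dots,1/p_r$. The key algebraic fact is a partial-fractions decomposition: every $f \in \cB$ can be written as a genuine polynomial in $y$ plus a sum of terms $c/(y-a)^j$ over the real roots $a \in \cZ_\cE$ and, over the complex (non-real) roots, real-coefficient terms of the form $(\alpha y + \beta)/(y^2 + uy + v)^j$ with the quadratic strictly positive on $\R$. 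The latter terms are bounded on $\R$. Now I would invoke the solution of the one-dimensional Hamburger moment problem: the restriction of $L$ to the polynomial subalgebra $\R[y]$ is positive, hence is represented by a positive Borel measure $\nu$ on $\R$. The issue is that $\cZ_\cE$ might carry mass under $\nu$, obstructing integrability of the singular terms $1/(y-a)^j$. To rule this out, observe that for $a \in \cZ_\cE$ the element $1/(y-a)^2 = (1/(y-a))^2$ is a square in $\cB$, hence $L(1/(y-a)^2) \ge 0$ is finite; but if $\nu(\{a\})>0$ then $\int (y-a)^{-2}\,d\nu = \infty$ unless one first shows $\nu(\{a\}) = 0$. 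The clean way is: $(y-a)^2 \cdot 1/(y-a)^2 = 1$, and more usefully, for each $n$, $(y-a)^{2n}/(y-a)^{2n} = 1$ while $L\big((y-a)^{2n}/(y-a)^{2n}\big)$ computed via Cauchy–Schwarz against the square $1/(y-a)^{2n}$ forces the point mass at $a$ to vanish. I would then set $\mu := \nu|_{\R \setminus \cZ_\cE}$ (equivalently, since $\nu(\cZ_\cE)=0$, $\mu = \nu$), check that $\hat\cB \cong \R \setminus \cZ_\cE$ as the character space, and verify the integral representation on the polynomial part (immediate), on the bounded quadratic terms (immediate, by dominated convergence / boundedness), and on the singular terms $1/(y-a)^j$ (now integrable because $\nu$ has no atom at $a$, with the value matching $L$ by approximating $1/(y-a)^j$ inside $\cB$ and using positivity/Cauchy–Schwarz estimates to pass to the limit). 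Finiteness of $\cZ_\cE$ and $\mu(\cZ_\cE)=0$ are then recorded as the final assertions.

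Alternatively — and this is likely the route the paper intends, given the placement — one can bypass partial fractions and instead apply Corollary \ref{corquotient} or the fibre theorem machinery: $\cB = \cE^{-1}\R[y]$ is a finitely generated real algebra, and one shows directly that $\sum \cB^2$ has (MP) by exhibiting $\cB$ as $\R_{d}[\ux]/\cJ$ for suitable $d$ and $\cJ$ (introduce auxiliary variables $z_i$ with relations $z_i p_i = 1$), then checking (MP) for that quotient. The representing measure lives on the variety $\hat\cB = \cZ(\cJ)$, which projects bijectively onto $\R \setminus \cZ_\cE$. The substantive content in either approach is the same: the localization only removes a finite set of points from the line and forces the representing measure to avoid that set, after which one-dimensional determinacy and the classical Hamburger solution do the work. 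The main obstacle remains the reduction from the general $\cE$ to the finitely generated case with uniform control of $L^1$-membership, which I would settle by the compactness-of-measures argument sketched above.
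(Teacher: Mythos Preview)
Your primary approach has a genuine gap in the finitely generated case. You restrict $L$ to $\R[y]$ and invoke Hamburger to produce a measure $\nu$; but $\nu$ is determined only by the polynomial moments $L(y^n)$, and there is no mechanism in your argument linking $\nu$ to the values of $L$ on the genuinely rational elements of $\cB$. Your point-mass argument (``$L(1/(y-a)^2)$ is finite, hence $\nu(\{a\})=0$'') silently assumes $L(1/(y-a)^2)=\int (y-a)^{-2}\,d\nu$, which is precisely what must be proved. When the Hamburger problem for $L\lceil\R[y]$ is indeterminate there are many candidate $\nu$'s, and picking one that also reproduces $L$ on $1/(y-a)^j$ and the quadratic-denominator terms is the entire content of the proposition; partial fractions and Cauchy--Schwarz inside $\cB$ do not bridge that gap. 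Your alternative via the fibre theorem is also not viable as stated: for a typical $\cB$ (e.g.\ $\R[y,y^{-1}]$) there are no nonconstant elements bounded on $\hat\cB$, so Theorem~\ref{fibreth1} has nothing to bite on.

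The paper proceeds quite differently and avoids the matching problem altogether. The key input is Lemma~\ref{auxlemmad1}: in one variable, $f\in\cB$ is nonnegative on $\R\setminus\cZ(q)$ if and only if $f\in\sum\cB^2$. Hence a positive functional $L$ on $\cB$ is automatically positive for the \emph{pointwise} order. One then enlarges to the ordered space $E=\cB+C_c(\R)$ (with the order (\ref{defegeq})); since $\R[y]\subseteq\cB$, $\cB$ is cofinal in $E$, so $L$ extends to a positive $\tilde L$ on $E$. Riesz' theorem applied to $\tilde L\lceil C_c(\R)$ yields $\mu$, and a Choquet adapted-space estimate (Lemma~\ref{poslinfunct}) shows $\tilde L(g)=\int g\,d\mu$ first for $g=(p^2+1)/q^2$ and then, by linearity and the span-by-squares trick, for all of $\cB$. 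No reduction to finitely generated $\cE$ is needed; that case is only an addendum (finite generation of $\cB$, finiteness of $\cZ_\cE$, and $\mu(\cZ_\cE)=0$ from $\int q^{-2}\,d\mu<\infty$). The essential idea you are missing is that the measure must be built from $L$ acting on \emph{compactly supported} test functions dominated by elements of $\cB$, not from $L$ restricted to polynomials.
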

The proof of Proposition \ref{porprationad1} is given below. First we derive the following corollary.
\begin{cor}\label{corpropmp}
Suppose that $\cE\in{\bf{D}}(\R[y])$ is  finitely generated  and  $\cC$ is a finite commutative real unital algebra. Then\, $\sum\, (\cE^{-1}\R[y]\otimes \cC)^2 $\, obeys (MP) in the algebra\, $\cE^{-1}\R[y]\otimes \cC$.
\end{cor}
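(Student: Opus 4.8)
The plan is to realize $\cE^{-1}\R[y]\otimes\cC$ as a quotient of an algebra whose sum-of-squares cone is already known to have (MP), and then invoke Corollary \ref{corquotient}. Since $\cC$ is a finite commutative real unital algebra, it is a finite-dimensional commutative $\R$-algebra, hence Artinian, so by the structure theory of Artinian rings it decomposes as a finite product $\cC\cong\cC_1\times\dots\times\cC_r$ of local rings. However, a cleaner route avoids the structure theory: I would first observe that $\sum(\cE^{-1}\R[y])^2$ has (MP) in $\cE^{-1}\R[y]$ by Proposition \ref{porprationad1}, and then reduce the tensoring with $\cC$ to a Nullstellensatz/quotient argument as follows.

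First I would choose a finite set of algebra generators $c_1,\dots,c_s$ of $\cC$ over $\R$; since $\cC$ is finite-dimensional, each $c_i$ satisfies a polynomial relation, and more importantly $\cC$ is a quotient of a polynomial ring $\R[z_1,\dots,z_s]$ by an ideal $\cN$ with zero-dimensional, hence finite, real zero set $\cZ(\cN)\subseteq\R^s$ (possibly empty). Writing $\cB:=\cE^{-1}\R[y]$, the algebra $\cB\otimes\cC$ is then a quotient of $\cB\otimes\R[z_1,\dots,z_s]=\cE^{-1}(\R[y,z_1,\dots,z_s])$ — that is, a localization of a polynomial algebra in $d=s+1$ variables by the multiplicative set $\cE$ (viewed in the larger ring), modulo the ideal generated by $\cN$. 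Call this larger localized algebra $\cA':=\cE^{-1}\R[y,z_1,\dots,z_s]$ and let $\cJ'$ be the kernel of the surjection $\cA'\to\cB\otimes\cC$. By Corollary \ref{corquotient}, it suffices to prove that $\sum(\cA')^2$ obeys (MP) in $\cA'$.

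To establish (MP) for $\sum(\cA')^2$ in $\cA'$, I would apply the general fibre theorem (Theorem \ref{fibreth1}) with $\cT=\sum(\cA')^2$, using as bounded elements $\sf h$ the images of the $z_i$: indeed $\cK(\sum(\cA')^2)=\hat{\cA'}$ is contained (via the characters) in $\cZ_\cE$'s complement crossed with $\cZ(\cN)$, and since $\cZ(\cN)$ is finite, each $z_i$ is bounded on $\cK(\sum(\cA')^2)$. For each $\lambda\in{\sf h}(\cK(\sum(\cA')^2))\subseteq\cZ(\cN)$ (a finite set), the fibre algebra $\cA'/\hat{\cI}_\lambda$ is a quotient of $\cE^{-1}\R[y]$ — one has killed the $z_i-\lambda_i$ — and by Proposition \ref{porprationad1} together with Corollary \ref{corquotient}, $\sum(\cA'/\hat{\cI}_\lambda)^2$ has (MP). Then Theorem \ref{fibreth1}, implication $(iii)'\to(i)$, gives (MP) for $\sum(\cA')^2$ in $\cA'$, and descending through $\cJ'$ via Corollary \ref{corquotient} yields (MP) for $\sum(\cB\otimes\cC)^2$ in $\cB\otimes\cC$, which is the claim.

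The main obstacle I anticipate is the bookkeeping needed to justify that the $z_i$ really are bounded on the real character space of $\cA'$ and that the fibres $\cA'/\hat{\cI}_\lambda$ are genuinely quotients of $\cE^{-1}\R[y]$ — one must be careful that localizing at $\cE$ and then passing to the fibre commutes appropriately, and that $\cZ(\cN)$ being finite is really what one gets from $\cC$ being finite-dimensional over $\R$ (a character of $\cC$ is an $\R$-point of $\cZ(\cN)$, and there are only finitely many since $\cC$ is Artinian). A slicker alternative, which I would mention as a remark, is to use the decomposition $\cC\cong\prod_j\cC_j$ into local Artinian algebras: then $\cB\otimes\cC\cong\prod_j(\cB\otimes\cC_j)$, any positive functional splits as a sum over the factors, and on each factor $\cB\otimes\cC_j$ the maximal ideal of $\cC_j$ is nilpotent so every character factors through $\cB\otimes(\cC_j/\mathrm{rad})=\cB$ (if $\cC_j/\mathrm{rad}\cong\R$) — reducing directly to Proposition \ref{porprationad1} and Corollary \ref{corquotient} without invoking the fibre theorem at all.
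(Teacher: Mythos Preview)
Your main approach has a genuine gap. You claim that $\cK(\sum(\cA')^2)=\hat{\cA'}$ is contained in $(\R\setminus\cZ_\cE)\times\cZ(\cN)$, but this is false: by Lemma~\ref{hataddecription}, $\hat{\cA'}=(\R\setminus\cZ_\cE)\times\R^s$, since $\cA'=\cE^{-1}\R[y,z_1,\dots,z_s]$ knows nothing about $\cN$ --- you have not yet passed to the quotient by $\cJ'$. Hence the $z_i$ are \emph{not} bounded on $\hat{\cA'}$, and the fibre theorem does not apply as stated. Worse, the intermediate claim that $\sum(\cA')^2$ has (MP) is in general \emph{false}: take $\cE=\{1\}$ and $s=1$, so $\cA'=\R[y,z_1]\cong\R_2[\ux]$, for which it is classical that $\sum\R_2[\ux]^2$ fails (MP). The fix is easy but essential: apply Theorem~\ref{fibreth1} not to $\cT=\sum(\cA')^2$ but to $\cT=\cJ'+\sum(\cA')^2$. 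Then $\cK(\cT)=\hat{\cA'}\cap\cZ(\cJ')\subseteq(\R\setminus\cZ_\cE)\times\cZ(\cN)$, the $z_i$ are bounded there since $\cZ(\cN)=\hat\cC$ is finite, and the fibre argument goes through; Corollary~\ref{corquotient} then gives (MP) for $\sum(\cB\otimes\cC)^2$ directly, without the extra reduction step.

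Your alternative sketch is in fact much closer to what the paper does. The paper avoids the fibre theorem entirely: it takes a positive functional $L$ on $\cB\otimes\cC$, notes that $\hat\cC=\{\eta_1,\dots,\eta_n\}$ is finite, chooses $e_k\in\cC$ with $\eta_j(e_k)=\delta_{jk}$, shows (via the Cauchy--Schwarz style observation that $L$ kills $\cB\otimes\cI$ for $\cI=\bigcap_j\ker\eta_j$) that each $L_k:=L(\cdot\otimes e_k)$ is a positive functional on $\cB$, applies Proposition~\ref{porprationad1} to each $L_k$, and reassembles the resulting measures. This is essentially your ``split over factors and reduce to $\cB$'' idea, carried out with explicit idempotent-like elements rather than the Artin decomposition. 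Your corrected fibre-theorem route is a legitimate alternative, but it is heavier machinery for what the paper does by hand in a page.
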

\begin{proof}
Throughout this proof we abbreviate $\cA:=\cE^{-1}\R[y]\otimes \cC$ and $\cB:=\cE^{-1}\R[y]$. Suppose that $L$ is a positive linear functional on the algebra $\cE^{-1}R[y]\otimes \cC$.

Let $\cI$ denote the ideal of  elements of
$ \cC$ which vanish on all characters of $\cC$.
Since $\cC$ is a finite algebra, the character set $\hat{\cC}$ is finite
and each positive linear functional on $\cC$ is a linear combination of characters, so it vanishes on $\cI$. Let $f\in\cB$. Clearly, $L(f^2\otimes \cdot)$ is a positive linear functional on $\cC$. Thus $\cC$ has a positive linear functional, $\hat{\cC}$ is not empty,  say $\hat{\cC}=\{\eta_1,\dots,\eta_n\}$ with $n\in \N$. Further,    $L(f^2\otimes c)=0$ for $c\in \cI$. Since the unital algebra   $\cC$ is spanned by its squares, we therefore have $L(g\otimes c)=0$ for all $g\in\cB$ and $c\in \cC$. We choose elements $e_1,\dots,e_n\in \cC$ such that $\eta_j(e_k)=\delta_{jk}$ for $j,k=1,\dots,n$ and define  linear functionals $L_k$ on the algebra $\cB$ by $L_k(\cdot )=L(\cdot \otimes e_k)$. Since $\eta_j(e_k^2-e_k)=0$ for all $j$, we conclude that $e_k^2-e_k\in \cI$. Hence
$$L_k(f^2)= L(f^2\otimes e_k)=L(f^2\otimes e_k^2)=L((f\otimes e_k)^2)\geq 0, \quad  f\in\cB.$$ That is, $L_k$ is a  positive linear functional on $\cB$. Therefore, by Proposition \ref{porprationad1}, there exists a positive regular Borel measure $\mu_k$ on $\R$ such that $L_k(f)=\int f\,d\mu_k$ for $f\in\cB$ and $\mu_k(\cZ_\cE)=0$.

 From Lemma \ref{hataddecription} below we obtain $\hat{\cB}=\{\chi_t; t\in \R\backslash \cZ_\cE\}$. It is easily  verified that $\hat{\cA}=\cup_{j=1}^n \{\chi_t\otimes \eta_j; t\in \R\backslash \cZ_\cE\}.$ We define a positive regular Borel measure $\mu$ on $\hat{\cA}$ by~ $\mu(\sum_j M_j\otimes \eta_j)=\sum_j \mu_j(M_j)$~ for Borel sets $M_j\subseteq\R\backslash \cZ_\cE$.

 Let $f\in \cB$ and $c\in \cC$. Since the element $c-\sum_j\eta_j(c)e_j$ is annihilated by all $\eta_k$,  it belongs to $ \cI$. Therefore,\,  $L(f\otimes c)=\sum_j \eta_j(c) L(f\otimes e_j)$ and hence
\begin{align*}
L(f\otimes c)&= \sum_{j=1}^n\, \eta_j(c) L(f\otimes e_j) =\sum_j\eta_j(c) \int_{\hat{\cB}} f \, d\mu_j\\&=\sum_j\eta_j(c) \int_{\hat{\cA}} (f\otimes e_j) \, d\mu=\int_{\hat{\cA}} (f\otimes c)\, d\mu,
\end{align*}
that is,  $L$ is given by the integral with respect to the measure $\mu$. This shows that  $\sum \cA^2$ obeys (MP) in the algebra $\cA$.
\end{proof}

In the proof of Proposition \ref{porprationad1} we use  the following  lemmas. We retain the notation established above.
\begin{lem}\label{auxlemmad1} Let $\cB$ be as in Proposition \ref{porprationad1} and  let $f=\frac{p}{q}\in \cB$, where  $q\in \cE$ and $p\in \R[y]$. Then $f(y)\geq 0$ for all  $y\in \R\backslash \cZ(q)$ if and only if
$f\in \sum \cB^2$.
\end{lem}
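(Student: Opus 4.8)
The plan is to reduce the nontrivial implication to the classical fact that a nonnegative real polynomial in one variable is a sum of two squares of polynomials.

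The implication $f\in \sum \cB^2 \Rightarrow f(y)\geq 0$ for $y\in \R\setminus \cZ(q)$ is essentially free: writing $f=\sum_i (p_i/q_i)^2$ with $q_i\in \cE$, the right-hand side is $\geq 0$ at every point of the dense open set $\R\setminus(\cZ(q)\cup\bigcup_i \cZ(q_i))$, and since $f=p/q$ is a continuous function on $\R\setminus \cZ(q)$, nonnegativity extends to all of $\R\setminus \cZ(q)$ by continuity.

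For the converse, suppose $f=p/q\geq 0$ on $\R\setminus \cZ(q)$, and consider the polynomial $pq\in \R[y]$. First I would observe that $pq\geq 0$ on all of $\R$: for $y\notin \cZ(q)$ we have $pq(y)=f(y)\,q(y)^2\geq 0$, while for $y\in \cZ(q)$ trivially $pq(y)=p(y)\cdot 0=0$. Then, invoking the classical decomposition, write $pq=g_1^2+g_2^2$ with $g_1,g_2\in \R[y]$. Since $q\in \cE$, the quotients $g_1/q$ and $g_2/q$ lie in $\cB$, and
\[
f=\frac{p}{q}=\frac{pq}{q^2}=\Big(\frac{g_1}{q}\Big)^2+\Big(\frac{g_2}{q}\Big)^2,
\]
which exhibits $f$ as an element of $\sum \cB^2$.

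The argument is short and I do not expect a genuine obstacle; the only non-bookkeeping input is the one-variable nonnegativity-implies-sum-of-squares fact, which, if one prefers to keep the proof self-contained, can be recalled by factoring $pq$ into real linear factors — these occur with even multiplicity because $pq$ does not change sign — together with irreducible quadratic factors, each of which is a sum of two squares, and then multiplying out using the identity $(a^2+b^2)(c^2+d^2)=(ac-bd)^2+(ad+bc)^2$.
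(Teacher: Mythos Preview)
Your proof is correct and follows essentially the same route as the paper: multiply through by $q^2$ to obtain the nonnegative polynomial $pq$, invoke the one-variable sum-of-two-squares theorem, and divide by $q^2$ to land in $\sum \cB^2$. The paper dismisses the easy implication as ``obvious'' and handles nonnegativity of $pq$ on $\cZ(q)$ by continuity (noting $\cZ(q)$ is finite) rather than by your direct evaluation, but these are cosmetic differences.
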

\begin{proof}
Suppose that $f\geq 0$ on $\R\backslash \cZ(q)$. Then  $q^2f=pq\geq 0$ on $\R\backslash \cZ(q)$ and hence on the whole real line, since $\cZ(q)$ is empty or finite. Since each nonnegative polynomial in one (!) variable  is a sum of two squares in $\R[y]$, we have  $pq=p_1^2+p_2^2$ with $p_1,p_2\in \R[y]$. Therefore, since $q\in \cE$, we get
$f=(\frac{p_1}{q})^2+(\frac{p_1}{q})^2\in \sum \cB^2.$ The converse implication is obvious.
\end{proof}

\begin{lem}\label{hataddecription}
 For $t\in \R^d\backslash \cZ_\cD$,   $p\in \R_d[\ux]$, and $q\in \cD$,  we define  $\chi_t(\frac{p}{q}) = \frac{p(t)}{q(t)}$. Then $\hat{\cA}=\{\chi_t; t\in \R^d\backslash \cZ_\cD\}$.
\end{lem}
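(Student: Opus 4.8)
The plan is to prove the two inclusions separately: first that every $\chi_t$ with $t\in\R^d\backslash\cZ_\cD$ is a well-defined character of $\cA$, and then that conversely every character of $\cA$ equals some such $\chi_t$. Throughout I use that $\cA=\cD^{-1}\R_d[\ux]$ is the localization of the integral domain $\R_d[\ux]$ at the multiplicative set $\cD$ with $0\notin\cD$; hence the canonical homomorphism $\R_d[\ux]\to\cA$, $p\mapsto p/1$, is injective, and two fractions satisfy $p/q=p'/q'$ in $\cA$ if and only if $pq'=p'q$ in $\R_d[\ux]$ (the general criterion ``$r(pq'-p'q)=0$ for some $r\in\cD$'' simplifies because $\R_d[\ux]$ has no zero divisors and $r\neq 0$).

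First I would check that $\chi_t$ is well defined and is a character. Fix $t\in\R^d\backslash\cZ_\cD$; by definition of $\cZ_\cD$ this means $q(t)\neq 0$ for every $q\in\cD$, so the prescription $\chi_t(p/q)=p(t)/q(t)$ at least makes sense once a representative is chosen. If $p/q=p'/q'$ in $\cA$, then $pq'=p'q$ in $\R_d[\ux]$, and evaluating this polynomial identity at $t$ gives $p(t)q'(t)=p'(t)q(t)$; dividing by the nonzero numbers $q(t),q'(t)$ yields $p(t)/q(t)=p'(t)/q'(t)$, so $\chi_t$ is independent of the chosen representative. That $\chi_t$ respects addition and multiplication and satisfies $\chi_t(1)=1$ is immediate from the arithmetic of fractions together with the fact that evaluation at $t$ is a ring homomorphism on $\R_d[\ux]$. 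Hence $\chi_t\in\hat{\cA}$.

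For the reverse inclusion, I would take an arbitrary $\chi\in\hat{\cA}$. Its restriction to the subalgebra $\R_d[\ux]\subseteq\cA$ is a character of $\R_d[\ux]$, hence point evaluation at the point $t:=(\chi(x_1),\dots,\chi(x_d))\in\R^d$, i.e.\ $\chi(p)=p(t)$ for all $p\in\R_d[\ux]$. For $q\in\cD$ the element $q/1$ is invertible in $\cA$, so $\chi(q)=q(t)$ is a nonzero real number; since this holds for every $q\in\cD$, the point $t$ lies in $\R^d\backslash\cZ_\cD$. Finally, for an arbitrary $p/q\in\cA$ with $p\in\R_d[\ux]$ and $q\in\cD$, multiplicativity of $\chi$ gives $\chi(p/q)=\chi(p)\,\chi(q)^{-1}=p(t)/q(t)=\chi_t(p/q)$, so $\chi=\chi_t$. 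This yields $\hat{\cA}=\{\chi_t;t\in\R^d\backslash\cZ_\cD\}$.

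I do not expect any serious obstacle here; the only points that need a little care are the equality criterion for fractions in the localization — this is where the integral-domain property of $\R_d[\ux]$ enters — and the observation that membership of $t$ in the complement of $\cZ_\cD$ is forced automatically by the invertibility of the denominators, rather than being an extra hypothesis.
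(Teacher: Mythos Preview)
Your proof is correct and follows essentially the same approach as the paper: both directions are handled identically, with $t=(\chi(x_1),\dots,\chi(x_d))$ recovered from an arbitrary character and the invertibility of each $q\in\cD$ forcing $q(t)\neq 0$. Your write-up is in fact slightly more careful than the paper's, which merely asserts that $\chi_t$ is well defined without spelling out the localization equality criterion.
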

\begin{proof}
First we note that for any  $t\in \R^d\backslash \cZ_\cD$, $\chi_t$ is a well-defined character on $\cA$, that is, $\chi_t\in \hat{\cA}$.

Conversely, suppose that $\chi\in \hat{\cA}$. Put $t_j=\chi(x_j)$ for $j=1,\dots,d$. Then $t=(t_1,\dots,t_d)\in \R^d$ and  $\chi(p(x))= p(\chi(x_1),\dots\chi(x_d)))=p(t)$ for $p\in \R_d[\ux].$ For $q\in \cD$ we have $1=\chi(1)=\chi(qq^{-1})=\chi(q)\chi(q^{-1})=q(t)\chi(q^{-1}).$ Hence $q(t)\neq 0$ for all $q\in \cD$,  that is, $t\in\R^d\backslash \cZ_\cD$. Further, $\chi(q^{-1})=q(t)^{-1}$ and therefore $\chi(\frac{p}{q})=\chi(p)\chi(q^{-1})=p(t)q(t)^{-1}=\chi_t(\frac{p}{q})$. Thus $\chi=\chi_t$.
\end{proof}
Set  $E:=\cA+C_c(\R^d).$ Let
$f=\frac{p}{q}\in \cA$, where
$p\in \R_d[\ux]$, $q\in \cD$, and $\varphi\in C_c(\R^d)$. Then $f+\varphi\in E$ and we define\, \begin{align}\label{defegeq}
f+\varphi \geq 0\quad {\rm if}\quad f(x)+\varphi(x)\geq 0\quad  {\rm for}\quad x\in\R^d\backslash\cZ(q).
\end{align}
Since $\cZ(q)$ is nowhere dense in $\R^d$,  $(E,\geq )$ is a real ordered vector space.

In the proofs of the following two lemmas  we  modify some arguments from Choquet's approach to the moment problem based on adapted spaces, see \cite{choquet} or \cite{berg}.
\begin{lem}\label{poslinfunct}
For each  positive linear functional $L$ on the ordered vector space $(E, \geq)$  there is a regular positive Borel measure $\mu$ on $\R^d$ such that
\begin{align}\label{repLint}
L(f
)=\int f\, d\mu \quad {for}\quad f\in\cA.
\end{align}
\end{lem}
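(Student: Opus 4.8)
The plan is to realize $L$ as a Radon measure on $\R^d$ via a Riesz-type representation, exploiting that $(E,\geq)$ with $E=\cA+C_c(\R^d)$ is an \emph{adapted space} in the sense of Choquet. First I would check the adaptedness hypotheses: $E$ contains $C_c(\R^d)$ and consists of continuous functions on the locally compact space $\R^d$ (more precisely, each $f=\frac{p}{q}\in\cA$ is defined and continuous off the nowhere-dense set $\cZ(q)$, and what matters for the representation is its behaviour as an element of the ordered vector space, with the order \eqref{defegeq}); moreover for every $f\in E$ there is $g\in E$ with $g\geq 0$ and $g$ dominating $|f|$ in the sense that for each $\varepsilon>0$ the set $\{|f|\geq\varepsilon g\}$ is relatively compact. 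To produce such a dominating $g$ one takes, for $f=\frac{p}{q}$, a polynomial majorant: $1+\|x\|^{2N}$ for $N$ large enough, divided by nothing (it already lies in $\R_d[\ux]\subseteq\cA$), grows faster than any fixed rational function $\frac{p}{q}$ outside a large ball, while on the bad set $\cZ(q)$ one absorbs the singularity into a $C_c$-correction; this is the point where one uses that $\cZ(q)$ is compact-in-pieces and nowhere dense.

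Next, given adaptedness, Choquet's theorem (see \cite{choquet} or \cite[\S2]{berg}) yields a regular positive Borel measure $\mu$ on $\R^d$ with $E\subseteq L^1(\R^d;\mu)$ and $L(h)=\int h\,d\mu$ for all $h\in E$; restricting to $h\in\cA$ gives \eqref{repLint}. The only subtlety in invoking Choquet is that our order on $E$ is the one declared in \eqref{defegeq}, which ignores the null set $\cZ(q)$; I would note that since each $\cZ(q)$ is nowhere dense (indeed a proper algebraic subset of $\R^d$), a function $f+\varphi\in E$ that is $\geq 0$ in the sense of \eqref{defegeq} is $\geq 0$ on a dense open set, and by continuity on its natural domain this is the honest pointwise order there; hence positivity of $L$ with respect to $\geq$ is exactly positivity against the pointwise cone on the complement of the (measure-zero, for any $\mu$ without atoms on these varieties) exceptional sets, so Choquet applies verbatim. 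One should also record that $\mu$ does not charge $\cup_{q\in\cD}\cZ(q)$: if it did, testing $L$ against a suitable $C_c$ bump supported near a point of $\cZ(q)$ together with the rational function $1/q^2$ (which is in $\cA$ and blows up there) would contradict finiteness of $L$ on $\cA$; this is exactly the kind of argument already used in Proposition \ref{porprationad1} for $d=1$, and it transfers.

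\textbf{Main obstacle.} The hard part will be the construction of the dominating function $g\in E$ witnessing adaptedness, because a single $f=\frac{p}{q}$ has two competing bad behaviours — polynomial growth at infinity and poles along $\cZ(q)$ — and $g$ must dominate both while still lying in $\cA+C_c(\R^d)$. The resolution is to handle them separately: a high-degree polynomial $1+\|x\|^{2N}\in\R_d[\ux]\subseteq\cA$ controls the growth at infinity on the region where $q$ is bounded away from $0$, and near $\cZ(q)$, which is nowhere dense, one adds a nonnegative element of $C_c(\R^d)$ that is large on a compact neighbourhood of the relevant part of $\cZ(q)$; then for every $\varepsilon>0$ the set $\{|f|\geq\varepsilon g\}$ is contained in a fixed ball minus a neighbourhood of $\cZ(q)$, hence relatively compact. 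Once this majorant is in hand, the rest is a direct citation of the adapted-spaces machinery, and the passage from $L$ on $E$ to the measure on $\R^d$ is automatic.
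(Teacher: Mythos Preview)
Your overall strategy—Riesz on $C_c(\R^d)$ to produce $\mu$, then a Choquet-style domination argument to get $L(f)=\int f\,d\mu$ for $f\in\cA$—is exactly the shape of the paper's proof. But the specific dominating function you propose does not work, and this is a genuine gap, not a detail.

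You want, for $f=p/q\in\cA$, an element $g\in E_+$ with $\{|f|\geq\varepsilon g\}$ relatively compact for every $\varepsilon>0$. You propose $g=(1+\|x\|^{2N})+\varphi$ with $\varphi\in C_c(\R^d)$ ``large on a compact neighbourhood of the relevant part of $\cZ(q)$.'' Two things fail. First, $\cZ(q)$ is typically an unbounded hypersurface in $\R^d$ (e.g.\ $q=x_1$ in $\R^2$), so there is no compact neighbourhood to support $\varphi$ on. Second, and more seriously, near $\cZ(q)$ the function $f$ blows up while your $g$ stays bounded (it is a polynomial plus a bounded $C_c$ function), so $|f|/g\to\infty$ there; the set $\{|f|\geq\varepsilon g\}$ therefore \emph{contains} a neighbourhood of $\cZ(q)$ rather than avoiding it, and is not relatively compact. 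A $C_c$ correction cannot dominate a pole. For the same reason, a direct black-box citation of Choquet's adapted-space theorem on $\R^d$ is problematic: the elements of $\cA$ are not continuous on $\R^d$.

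The paper's fix is to let the rational function dominate itself. For $g=\frac{p^2+1}{q^2}\in\cA$ one shows the pointwise inequality
\[
g\;\leq\; g\,\eta_\varepsilon \;+\; \varepsilon\bigl[(\|x\|^2+1)g+g^2\bigr]\qquad\text{on }\R^d\setminus\cZ(q),
\]
where $\eta_\varepsilon\in C_c(\R^d)$ is a Urysohn cutoff equal to $1$ on the compact set $K_\varepsilon=\{\|x\|^2+1+g\leq\varepsilon^{-1}\}$ and vanishing where $q^2\leq\varepsilon/2$, so that $g\eta_\varepsilon\in C_c(\R^d)$. The crucial point is that the majorant $(\|x\|^2+1)g+g^2$ lies in $\cA$: the term $g^2$ has the \emph{same} poles as $g$, squared, and therefore beats $g$ near $\cZ(q)$, while $(\|x\|^2+1)g$ handles infinity. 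Applying $L$ and using positivity gives $L(g)\leq\int g\,d\mu+\varepsilon\,L((\|x\|^2+1)g+g^2)$, hence $L(g)\leq\int g\,d\mu$; the reverse inequality follows from $\int g\,d\mu=\sup_{\eta}L(g\eta)\leq L(g)$ over cutoffs $\eta$ vanishing near $\cZ(q)$. Once $L(g)=\int g\,d\mu$ for all $g=\frac{p^2+1}{q^2}$, linearity and the fact that $\cA$ is spanned by squares finish the proof. So the missing idea in your sketch is precisely this self-domination by $g^2\in\cA$, in place of the impossible $C_c$ majorant near $\cZ(q)$.
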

\begin{proof}
Fix $p\in \R_d[\ux]$ and $q\in \cD$. Put $g=\frac{p^2+1}{q^2}$. We define $g$ on the whole space $\R^d$ by setting  $g=+\infty$ for $x\in \cZ(q)$ and abbreviate $\|x\|^2=x_1^2+\cdots+x_d^2$. Let $\varepsilon >0$ be arbitrary.  Obviously, the set
 $$K_\varepsilon:=\{x\in \R^d:\|x\|^2+1 +g \leq \varepsilon^{-1}\}$$ is compact and $q^2(x)\geq\varepsilon$ for $x\in K_\varepsilon$. Hence the compact set $K_\varepsilon$ and the closed set $\cU(q):=\{x\in \R^d:q^2(x)\leq \varepsilon/2 \}$ are disjoint, so by Urysohn's lemma there exists  a function $\eta_\varepsilon\in C_c(\R^d)$ such that $\eta_\varepsilon =1$ on $K_\varepsilon$,  $\eta_\varepsilon =0$ on $\cU(q)$ and $0\leq \eta_\varepsilon\leq 1$ on $\R^d$. Then we have $g\eta_\varepsilon \in C_c(\R^d)$ and
\begin{align}\label{ginequlai} g(x)\leq g(x)\eta_\varepsilon(x)  + \varepsilon[(\|x\|^2+1)g(x)+g^2(x)]\quad {\rm for}\quad x\in\R^d\backslash \cZ(q).
\end{align}
(Indeed, by the definitions of $\eta_\varepsilon$ and $K_\varepsilon$  we have $g(x)=g(x)\eta_\varepsilon(x)$ if $x\in K_\varepsilon$ and $g(x)<\varepsilon[(\|x\|^2+1)g(x)+g^2(x)]$ if $x\notin K_\varepsilon.$)

Since the restriction of $L$ to $C_c(\R^d)$ is a positive linear functional on  $C_c(\R^d)$,  by  Riesz' theorem there exists a positive regular Borel measure $\mu$ on $\R^d$ such that
\begin{align}\label{repvarphi}
L(\varphi)=\int \varphi\, d\mu\quad {\rm  for}\quad \varphi\in C_c(\R).
\end{align}
Using that $L$ is a positive functional with respect to the order relation  $\geq $ on $E$ and inequality (\ref{ginequlai}) and applying (\ref{repvarphi}) with $\varphi=g\eta_\varepsilon$ we derive
\begin{align*}
L(g)&\leq L(g\eta_\varepsilon)+\varepsilon L(\big(\|x\|^2+1)g+g^2\big)\\&=\int g\eta_\varepsilon d\mu+ \varepsilon L\big((\|x\|^2+1)g+g^2\big)\leq \int g d\mu+ \varepsilon L\big((\|x\|^2+1)g+g^2\big).
\end{align*}
Since $\varepsilon>0$ was arbitrary, we conclude that $L(g)\leq \int g d\mu$.

To prove the converse inequality, let $\cU_q$ be the set of $\eta\in C_c(\R^d)$ such that $0\leq \eta \leq 1$ on $\R^d$ and $\eta$ vanishes in some neighborhood of $\cZ(q)$. Since $\cZ(q)$ is nowhere dense in $\R^d$ and using  again by the positivity of $L$ and  (\ref{repvarphi}) we obtain
$$
\int g d\mu=\sup_{\eta\in \cU_q}~ \int g\eta\, d\mu =\sup_{\eta\in \cU_q}~ L(g\eta) \leq L(g).
 $$
Therefore, $L(g)=\int g d\mu$.

Thus we have proved the equality in (\ref{repLint})  for elements of the form $g=\frac{p^2+1}{q^2}$. Setting $p=0$, (\ref{repLint}) holds for all elements $\frac{1}{q^2}$, where $q\in \cD$, and  hence for  $\frac{p^2}{q^2}$ by linearity. Since $\cA$ is spanned by its squares, the equality (\ref{repLint}) holds
for all $f\in \cA$.
\end{proof}
\begin{lem}\label{vaguelimit}
Let $(\mu_i)_ {i\in I}$ be a net of positive regular Borel measures on $\R^d$ which converges vaguely   to a  positive regular Borel measure $\mu$ on $\R^d$. Let  $L$ is a linear functional on $\cA$ such that  $L(f
)=\int f\, d\mu_i$ for $i\in I$ and $f\in \cA$. Then $L(f)=\int fd\mu$ for $f\in \cA$.
\end{lem}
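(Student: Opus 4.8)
The plan is to reduce the assertion to the special class of elements $g=\frac{p^2+1}{q^2}$ with $p\in\R_d[\ux]$, $q\in\cD$ already handled in Lemma \ref{poslinfunct}, and then to carry the two one-sided estimates used there over from the measures $\mu_i$ to their vague limit $\mu$. The reduction is routine: every element of $\cA$ is of the form $p/q$ with $q\in\cD$, and a commutative real unital algebra is spanned by its squares, so once $L(g)=\int g\,d\mu$ is established for all such $g$, the choice $p=0$ gives $L(\frac1{q^2})=\int\frac1{q^2}\,d\mu$, subtraction yields $L(\frac{p^2}{q^2})=\int\frac{p^2}{q^2}\,d\mu$, and linearity finishes. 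So I fix $g=\frac{p^2+1}{q^2}$ and, exactly as in the proof of Lemma \ref{poslinfunct}, extend it to all of $\R^d$ by $g\equiv+\infty$ on $\cZ(q)$; since $p^2+1\ge 1$ and $q$ is continuous, the extended $g$ is a nonnegative lower semicontinuous function on $\R^d$.

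For the lower bound $\int g\,d\mu\le L(g)$ I would use that a nonnegative lower semicontinuous function on the $\sigma$-compact, locally compact space $\R^d$ is an increasing pointwise limit of functions $\varphi_n\in C_c(\R^d)$. For each $n$, vague convergence gives $\int\varphi_n\,d\mu=\lim_i\int\varphi_n\,d\mu_i\le\liminf_i\int g\,d\mu_i=L(g)$, using $\varphi_n\le g$ and the hypothesis $\int g\,d\mu_i=L(g)$ for all $i$ (which also forces $\mu_i(\cZ(q))=0$). Letting $n\to\infty$ and invoking monotone convergence yields $\int g\,d\mu\le L(g)<\infty$; in particular $\mu(\cZ(q))=0$, so integration of the rational function $g$ against $\mu$ is unambiguous.

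For the upper bound $L(g)\le\int g\,d\mu$ I would recycle inequality (\ref{ginequlai}) from the proof of Lemma \ref{poslinfunct}: for each $\varepsilon>0$ it provides $\eta_\varepsilon\in C_c(\R^d)$, vanishing in a neighbourhood of $\cZ(q)$, with $g(x)\le g(x)\eta_\varepsilon(x)+\varepsilon\big[(\|x\|^2+1)g(x)+g^2(x)\big]$ on $\R^d\setminus\cZ(q)$. The key point is that $g\eta_\varepsilon\in C_c(\R^d)$ (because $\eta_\varepsilon$ vanishes near $\cZ(q)$) while $G:=(\|x\|^2+1)g+g^2$ belongs to $\cA$. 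Integrating against $\mu_i$ and using $\int g\,d\mu_i=L(g)$ and $\int G\,d\mu_i=L(G)$ gives $L(g)\le\int g\eta_\varepsilon\,d\mu_i+\varepsilon L(G)$ for every $i$; passing to the vague limit in the $C_c$-term and using $0\le\eta_\varepsilon\le 1$ gives $L(g)\le\int g\,d\mu+\varepsilon L(G)$, and letting $\varepsilon\to 0$ (with $L(G)$ finite) yields the claim. Combining the two bounds gives $L(g)=\int g\,d\mu$, and the reduction above completes the proof.

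The only genuine subtlety I anticipate is the behaviour on $\cZ(q)$: there $g$ is unbounded and each $\mu_i$ assigns no mass, yet a vague limit could a priori concentrate mass on this closed, nowhere dense set. The lower semicontinuity argument is precisely what excludes this, since it bounds $\int g\,d\mu$ (computed with $g=+\infty$ on $\cZ(q)$) by the finite number $L(g)$. Apart from this, the proof is a direct transcription of the estimates already established for a single positive functional in Lemma \ref{poslinfunct}, now with the extra ingredient of vague convergence on the $C_c$-part.
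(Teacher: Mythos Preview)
Your proof is correct and follows essentially the same route as the paper: both reduce to elements $g=\frac{p^2+1}{q^2}$, establish the lower bound $\int g\,d\mu\le L(g)$ via approximation from below by $C_c$ functions (the paper phrases this as $\sup_{\eta\in\cU_q}\int g\eta\,d\mu$ rather than an increasing sequence, but it is the same lower-semicontinuity idea), and obtain the upper bound from inequality~(\ref{ginequlai}) together with vague convergence on the $C_c$ term $g\eta_\varepsilon$. Your upper-bound argument is in fact slightly tidier than the paper's, since you only integrate the inequality against $\mu_i$ and pass to the limit, whereas the paper inserts a telescoping decomposition involving both $\mu_i$ and $\mu$ and then needs the auxiliary estimate $\int h\,d\mu\le L(h)$ as well.
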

\begin{proof}
Let $f\in E$, $f\geq 0$ and $f=\frac{p}{q}$, where $q\in \cD$. Let $\cU_q$ be as in the preceding proof.  For $\eta\in \cU_q$  we have $f\eta \in C_c(\R^d)$ and hence $\lim_i \int f\eta\, d\mu_i= \int f\eta \, d\mu$.  Then
\begin{align}\label{intlh}
\int f\, d\mu =\sup_{\eta\in \cU_q} \int f\eta~ d\mu = \sup_{\eta\in \cU_q}\, \lim_i \int f\eta d\mu_i\leq  \lim_i \int f d\mu_i=L(f),
\end{align}
that is, $f\in L^1(\mu)$. Since $E$ is spanned by its squares,  $E\subseteq L^1(X;\mu).$

 Now we use  notation and facts from this proof of Lemma \ref{poslinfunct} and take an element
 $g=\frac{p^2+1}{q^2}$ as thererin
 Setting $h=(\|x\|^2+1)g+g^2$, inequality (\ref{ginequlai}) means that
\begin{align*}
g\leq g\eta_\varepsilon +\varepsilon h.
\end{align*}
Using this inequality  and  (\ref{intlh}), first with $f=g$ and then with $f=h$, we derive
\begin{align*}
\bigg|L(g)&-\int g\, d\mu\bigg|
=L(g)-\int g\, d\mu
=\int g\, d\mu_i -\int g\, d\mu\\ &
=\int (g-g\eta_\varepsilon)\, d\mu_i -\int (g-g\eta_\varepsilon)\, d\mu +
\int g\eta_\varepsilon  \, d\mu_i-\int g \eta_\varepsilon\, d\mu\\ &\leq \varepsilon\bigg( \int h\, d\mu_i + \int h d\mu\bigg)+\int g \eta_\varepsilon\, d\mu_i-\int g\eta_\varepsilon\, d\mu\\ &\leq \varepsilon( L(h) + L(h))+\int g\eta_\varepsilon\, d\mu_i-\int g \eta_\varepsilon\, d\mu .
\end{align*}
Since $g\eta_\varepsilon\in C_c(\R^d)$, $\lim_i\, \int g\eta_\varepsilon\, d\mu_i=\int g\eta_\varepsilon\, d\mu$ by the vague convergence.   Therefore, taking  $ \lim_i$ in the preceding inequality yields $|L(g)-\int g\, d\mu|\leq 2\, \varepsilon L(h)$. Hence $L(g)=\int g\, d\mu$ by letting $\varepsilon\to +0$. Arguing as in the proof of Lemma \ref{poslinfunct} this implies that $L(f)=\int f\, d\mu$ for all $f\in \cA$.
\end{proof}

{\it Proof of Proposition \ref{porprationad1}.}
Set $E:=\cB+C_c(\R)$. Let $(E,\geq )$  be the real ordered vector space defined above, see (\ref{defegeq}). Let $f\in \cB$. By Lemma \ref{auxlemmad1}, we  have $f\geq 0$ if and only if $f\in \sum \cB^2$. Hence $L(f)\geq 0$.
Since $\R[y]\subseteq \cB$,  $\cA$ is a cofinal subspace of the ordered vector space $(E,\geq )$. Therefore, $L$ can be extended to a positive linear functional $\tilde{L}$ on $(E,\geq  )$. Applying Lemma \ref{poslinfunct}, with $d=1$ and $\cA$ replaced by $\cB$,  to the functional $\tilde{L}$ yields (\ref{defegeq}).

Suppose in addition that $\cE$ is finitely generated. Let $q_1,\dots,q_r$ be generators of $\cE$. Then the algebra $\cB$ is generated by $y,q_1^{-1},\dots,q_r^{-1}$, so $\cB$ is finitely generated. Further,  $\cZ_\cE=\cap_j \cZ(q_j)$, so the set $\cZ_\cE$ is finite. If $q\in \cE$, then $q^{-2}\in \cA$ and hence $L(q^{-2})=\int q^{-2} d\mu <\infty$. Therefore, $\mu(\cZ(q))=0$, so that  $\mu(\cZ_\cE)=0$. Hence, by Lemma \ref{hataddecription},  the integral in (\ref{defegeq}) is over the set $\hat{\cA}\cong \R\backslash \cZ_\cE$ and $\mu$ is a  positive regular Borel measure on\, $\hat{\cA}$. Thus, $ \sum \cB^2$\, has property (MP). $\hfill \Box$
\medskip

{\bf Remarks.}
1. It is possible that there is {\it no} nontrivial positive linear functional  on the algebra $\cB$ in Proposition \ref{porprationad1}; for instance, this happens if $\cZ_\cE=\R$.

2.
Let $\cB$ be a real unital algebra  of rational functions in one variable and consider the following conditions:
\smallskip

(i)  All positive linear functionals on $\cB$ can be represented as integrals with respect to some positive regular Borel measure on $\R$ (or on the character set $\hat{\cB}$),
\\
(ii)  $f\in \cB$ and $\chi(f)\geq 0$ for all $\chi\in \hat{\cB}$ implies that $f\in \sum\cB^2$.
\smallskip

It seems to be of interest to characterize those algebras $\cB$ for which (i) or (ii) holds.
By Proposition \ref{porprationad1} and Lemma \ref{auxlemmad1}  the algebra $\cE^{-1}\R[y]$ satisfies (i) {\it and} (ii).
Since  $\cB:=\R[\frac{1}{y^2+1},\frac{1}{y^2+2}]$ is isomorphic to the polynomial algebra $\R[x_1,x_2]$,  (i) does not hold for $\cB$. For   $\cB:=\R[x,\frac{1}{x^2+1}]$ condition (i) is true, while (ii) is not fulfilled.

3. Another important open problem is the following: Characterize the finitely generated real unital algebras $\cA$ for which $\sum \cA^2$ has (MP).
\medskip

{\it Proof of Theorem \ref{rationalmp2}.}
First we note that the algebra $\cA$ is finitely generated, because  $\cD$ is finitely generated by assumption. Let us fix $\lambda \in {\sf h}(\cK(\cT))$
and abbreviate $\cB_\lambda:= \cE_\lambda^{-1}\R[y]\otimes \cC_\lambda.$
Then, by Corollary \ref{corpropmp},\,  $\sum \cB_\lambda^2$ has (MP) in the algebra\, $\cB_\lambda$. Therefore, Corollary  \ref{corquotient2} applies and shows that $\cT$ satisfies (MP) in $\cA$. The description of $\hat{\cA}$ was given in Lemma \ref{hataddecription}. $\hfill \Box$

\medskip

{\it Proof of Theorem \ref{rationalmp1}.}
 Let $I$ denote the net of all finitely generated multiplicative subsets $\cD$ of $\R_d[\ux]$ such that  $\cD\subseteq \cD_0$. Fix $\cD\in I$. Since $\cD$ satisfies the assumptions (i) and (ii),    Theorem \ref{rationalmp2} applies to $\cD$ and $L_0\lceil \cA_\cD$. Hence there exists a regular positive Borel measure $\mu_\cD$ on $\R^d$ such that $L_0(f)=\int f\, d\mu_\cD$ for $f\in \cA_\cD$. Since $\int 1\, d\mu_\cD=\mu_F(\R^d)=L_0(1)$ for all $\cD\in I$, it follows that the set $\{\mu_\cD;\cD\in I\}$ is  relatively vaguely compact. Hence there is a subnet of the net $(\mu_\cD)_{\cD\in I}$ which converges vaguely to some regular positive Borel measure $\mu$ on $\R^d$. For notational simplicity let us assume that the net $(\mu_\cD)_{\cD\in I}$ itself has this property. Now we fix $\cF\in I$ and apply Lemma \ref{vaguelimit} to the net $(\mu_\cD)_{\cD\supseteq \cF}$, the algebra $\cA_{\cF}$ and the functional $L=L_0\lceil \cA_\cF$ to conclude that $L_0(f)= \int f\, d\mu$ for  $f\in \cA_\cF$. Since each function $f\in \cA_0$ is contained in some algebra $\cA_\cF$ with $\cF\in I$,  (\ref{repL_0}) is satisfied. This  completes the proof.
$\hfill$ $\Box$

\smallskip

The general fibre Theorem \ref{fibreth1} fits nicely to  the multidimensional rational moment problem,  because in general the corresponding algebra $\cA$
contains  more  {\it bounded}\,  functions  on the semi-algebraic set $\cK(\cT)$ than in the polynomial case.

We illustrate the use of our Theorems \ref{rationalmp2} and \ref{rationalmp1} by some examples.
\begin{exa}
First let $d=2$.  Suppose that  $\cD$ contains $x_1-\alpha $ and  the semi-algebraic set $\cK(\cT)$ is a subset of $\{ (x_1,x_2): |x_1-\alpha|\geq c\}$ for some $\alpha\in \R$ and $c>0$. Then $h_1:=(x_1-\alpha )^{-1}$ is in $\cA$ and bounded on $\cK(\cT)$, so assumption (i) holds. Let $\lambda\in h_1(\cK(\cT))$. Then $x_1=\lambda^{-1}+\alpha$ in $\cA_\lambda$ , so $\cA_\lambda$ consists of rational functions in $x_2$ with denominators from some set\, $\cE_\lambda \in{\bf{D}}(\R[x_2])$. Hence assumption (ii) is satisfied with $\cC_\lambda =\C$. Thus  Theorem \ref{rationalmp2} applies if\,  $\cD$ is finitely generated. Replacing $\cD$ by $\cD_0$ and $\cT$ by $\cT_0$ in the preceding, the assertion of
Theorem \ref{rationalmp1} holds as well.

The above setup extends at once to arbitrary $d\in \N, d\geq 2,$ if we assume that $x_j{-}\alpha_j\in \cD$ and $|x_j{-}\alpha_j|\geq c$ on  $ \cK(\cT)$ for some $\alpha_j\in \R$, $c>0$, and  $j=,\dots,d-1.$
\end{exa}
\begin{exa}
Again we take  $d=2$. Let $  \cE \in{\bf{D}}(\R[x_2])$ and let $p$ be a nonconstant polynomial from $R[x_1]$. Suppose that $\cD$ is generated by $\cE$ and\, $p^{2}+\alpha$ for some $\alpha>0$. Obviously, $h_1=(p^{2}+\alpha)^{-1}\in \cA$ is bounded on  each semi-algebraic set $\cK(\cT).$ Let $\lambda\in h_1(\cK(\cT))$. Then we have $p(x_1)^{2}=\lambda^{-1}+\alpha$ in the fibre algebra $\cA_\lambda$. Let $\cC$ be the quotient algebra of $\R[x_1]$ by the ideal generated by\, $p(x_1)^2-\lambda^{-1}-\alpha$. Then  $\cA_\lambda$ is generated by two subalgebras which are isomorphic to\, $\cC$ and  $\cE^{-1}\R[x_2]$, respectively. Hence assumptions (i) and (ii) are fulfilled, so $\sum \cA^2$ has (MP) by Theorem \ref{rationalmp2} if\, $\cD$ is finitely generated. In the general case  Theorem \ref{rationalmp1} applies.
\end{exa}
In the following two examples we suppose that the sets $\cD$ are finitely generated.
\begin{exa}\label{fibrealgebraR}
Suppose that $\cD\in{\bf{D}}(\R[\ux])$ contains  the polynomials $q_j=1+x_j^2$ for $j=1,\dots,d$. Let $\cT=\sum (\cA_\cD)^2$. Then $\cK(\cT)=\widehat{\cA_{\cD}}=\R^d$ by Lemma \ref{hataddecription}.  Setting $h_j=q_j(x)^{-1}$,  $h_{d+j}=x_jq_j(x)^{-1}$ for $j=1,\dots,d$, all  $h_l\in \cA_\cD$  are bounded on $\cK(\cT)$.

Let $\lambda \in {\sf h}(\cK(\cT)).$ Then $\lambda_j=q_j(\lambda)^{-1}\neq 0$ and $\lambda_{d+j}=\lambda_jq_j(\lambda)^{-1}$;  hence we have $x_j=\lambda_{d+j}\lambda_j^{-1}$ for $j=1,\dots,d$ in the fibre algebra $(\cA_\cD)_\lambda.$ Thus, $(\cA_\cD)_\lambda=\R.$ Taking $\cE_\lambda=\{1\}$, we have $\cE_\lambda^{-1}\R[\ux]=\R[\ux]$, so (i) and (ii) are obviously satisfied.

Therefore, $\sum (\cA_\cD)^2$\, obeys (MP) by Theorem \ref{rationalmp2}. That is,  each positive linear functional on the algebra $\cA_\cD$ is given by some positive measure on\, $\widehat{\cA_\cD}=\R^d$.

The same conclusion and almost  the same reasoning
are valid if we assume instead that $\cD$ contains the polynomial $q=1+x_1^2+\dots+x_d^2.$ In this case we set $h_j=x_jq(x)^{-1}$ for $j=1,\dots,d$ and $h_{d+1}=q(x)^{-1}.$
\end{exa}

\begin{exa}
Suppose now that $\cD\in{\bf{D}}(\R[\ux])$ contains only the polynomials $q_j=1+x_j^2$ for $j=1,\dots,d-1$. Let $\cT=\sum (\cA_\cD)^2$. Then  $h_j:=q_j(x)^{-1}$ and  $h_{d+j-1}:=x_jq_j(x)^{-1}$ for $j=1,\dots,d-1$   are in $\cA_\cD$ and  bounded on $\cK(\cT)=\widehat{\cA_\cD}$.
Arguing as in Example \ref{fibrealgebraR} it follows that  $x_j=\lambda_{d+j-1}\lambda_j^{-1}$ for $j=1,\dots,d-1$ in the algebra $(\cA_\cD)_\lambda.$  Therefore  $(\cA_\cD)_\lambda$ is an algebra $\cE_\lambda^{-1}\R[x_d]$  of rational functions in the variable $x_d$  for some finitely generated set\, $\cE_\lambda \in{\bf{D}}(\R[x_d])$.

Then, again by Theorem \ref{rationalmp2}, $\sum (\cA_\cD)^2$\, satisfies (MP). The same is true if we assume instead that the polynomial $q=1+x_1^2+\dots+x_{d-1}^2$ is in $\cD\in{\bf{D}}(\R[\ux])$.
\end{exa}

{\bf Remark.} Assumption (ii) is needed to ensure that the fibre preorderings $\cT_\lambda$ satisfy (MP). The crucial result for this is  Proposition \ref{porprationad1} which states  that the cone\, $\sum\, (\cE^{-1}\R[y])^2 $ obeys (MP) for finitely generated $\cE\in{\bf{D}}(\R[y])$. A similar assertion holds for several other algebras of rational functions as well; a sample is $\R[x_1,x_2,\frac{1}{x_2^2+1}]$.  Replacing $\cE^{-1}\R[y]$  by such an algebra  the fibre theorem  will lead to further results on the multidimensional rational moment problem.

\section{An extension theorem}\label{extensionsection}
In this section we derive a theorem which characterizes  moment functional on $\R^d$ in terms of extensions.

Throughout let $\cA$ denote the real algebra of functions on $(\R^d)^\times:=\R^d\backslash \{0\}$ generated by the polynomial algebra $\R_d[\ux]$ and the functions
\begin{align}\label{sumfkl1}
f_{kl}(x):=x_kx_l(x_1^2+\dots+x_d^2)^{-1},~~{\rm  where}~~ k,l=1,\dots,d,\, x\in\R^d\backslash \{0\} .
\end{align}
Clearly, these functions satisfy the identity
\begin{align}
\sum_{k,l=1}^d\, f_{kl}(x)^2=1.
\end{align}
That is, the functions $f_{kl}$, $k,l=1,\dots,d$, generate the coordinate algebra $C(S^{d-1})$ of the unit sphere $S^{d-1}$ in $\R^d$.
The next lemma describes  the character set $\hat{\cA}$  of $\cA$.
\begin{lem}\label{charactersetahat}
The set $\hat{\cA}$ is parameterized by the disjoint union of $\R^d\backslash \{0\} $ and $ S^{d-1}$. For\, $x\in \R^d\backslash \{0\}$\, the  character $\chi_x$  is the  evaluation of functions  at $x$ and  for $t\in S^{d-1}$ the character $\chi^t$ acts by $\chi^t(x_j)=0$ and $\chi^t(f_{kl})=f_{kl}(t)$, where $j,k,l=1,\dots,d$.
 \end{lem}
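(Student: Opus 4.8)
The plan is to identify characters of $\cA$ with algebra homomorphisms $\chi:\cA\to\R$ with $\chi(1)=1$, and to split into two cases according to the value of $r^2:=\chi(x_1^2+\cdots+x_d^2)$. Set $t_j:=\chi(x_j)$ and note that $\chi$ is determined by the values $t_1,\dots,t_d$ together with the values $\chi(f_{kl})$, since these elements generate $\cA$. Applying $\chi$ to the defining relation $(x_1^2+\cdots+x_d^2)f_{kl}=x_kx_l$ gives $r^2\,\chi(f_{kl})=t_kt_l$, and applying it to $\sum_{k,l}f_{kl}^2=1$ gives $\sum_{k,l}\chi(f_{kl})^2=1$; in particular the $\chi(f_{kl})$ cannot all vanish, so the relation $r^2\chi(f_{kl})=t_kt_l$ forces $r^2\ge 0$ in the case $r^2\neq 0$ and, more importantly, rules out $r^2<0$ only indirectly — I will instead argue directly as follows.

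First, the case $r^2\neq 0$. From $r^2\chi(f_{kl})=t_kt_l$ we get $\chi(f_{kl})=t_kt_l/r^2$; feeding this into $\sum_{k,l}\chi(f_{kl})^2=1$ yields $(\sum_j t_j^2)^2=r^4$, i.e. $(t_1^2+\cdots+t_d^2)^2=(r^2)^2$. Combined with $\chi$ being a homomorphism and $r^2=\chi(\sum_j x_j^2)$, one checks $\sum_j t_j^2=\chi(\sum_j x_j^2)=r^2$ directly (apply $\chi$ to $\sum_j x_j^2$), so in fact $r^2=\sum_j t_j^2>0$ and $x:=(t_1,\dots,t_d)\in\R^d\setminus\{0\}$. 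Then $\chi(f_{kl})=x_kx_l/\|x\|^2=f_{kl}(x)$, and since $\R_d[\ux]$ and the $f_{kl}$ generate $\cA$, $\chi$ agrees with the evaluation character $\chi_x$ on a generating set, hence $\chi=\chi_x$. Conversely each $\chi_x$ with $x\neq 0$ is a well-defined character because the defining rational functions have no poles on $\R^d\setminus\{0\}$.

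Second, the case $r^2=0$, i.e. $\chi(x_1^2+\cdots+x_d^2)=0$. Then $\chi(x_j^2)=\chi(x_j)^2=t_j^2\ge 0$ for each $j$ and $\sum_j t_j^2=0$, so $t_j=0$ for all $j$; thus $\chi(x_j)=0=\chi^t(x_j)$. It remains to read off the values $\chi(f_{kl})$. Put $c_{kl}:=\chi(f_{kl})$. Applying $\chi$ to the relations satisfied by the $f_{kl}$ inside $\cA$ — they generate a copy of $C(S^{d-1})$, so every polynomial relation among the $f_{kl}$ that holds on $S^{d-1}$ holds in $\cA$ — shows that $(c_{kl})$ must itself satisfy all those relations; equivalently the assignment $f_{kl}\mapsto c_{kl}$ extends to a character of $C(S^{d-1})$, which by the (real) Nullstellensatz / Gelfand theory corresponds to a point $t\in S^{d-1}$ with $c_{kl}=f_{kl}(t)=t_kt_l$. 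Here one must also verify $\sum_j t_j^2=1$, which follows from $\sum_{k,l}c_{kl}^2=1$ together with $c_{kl}=t_kt_l$. Hence $\chi=\chi^t$ on the generators $x_j$ and $f_{kl}$, so $\chi=\chi^t$. Conversely, one checks that each prescription $\chi^t$, $t\in S^{d-1}$, does define an algebra homomorphism $\cA\to\R$: it factors through the quotient of $\R_d[\ux]\otimes C(S^{d-1})$ by the relations $x_j f_{kl}=0$ and $x_jx_k=0$ ($\cA$ is a quotient of such a tensor algebra via $g\otimes h\mapsto g\cdot h$), and $t$ provides a consistent evaluation.

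Finally, the two families are disjoint: a character of the form $\chi_x$ has $\chi_x(\sum_j x_j^2)=\|x\|^2\neq 0$, while $\chi^t(\sum_j x_j^2)=0$. This gives the claimed parametrization of $\hat{\cA}$ as the disjoint union of $\R^d\setminus\{0\}$ and $S^{d-1}$, and the stated description of the weak topology point-set. The main obstacle is the bookkeeping in the case $r^2=0$: one has to argue that knowing the $c_{kl}$ satisfy the $C(S^{d-1})$-relations forces them to be of the form $t_kt_l$ for an honest point $t$ of the sphere; this is where the structure of $C(S^{d-1})$ (its characters are exactly point evaluations) is essential, and care is needed because a priori $\chi$ is only a homomorphism on the abstract algebra $\cA$, not obviously on the subalgebra generated by the $f_{kl}$ — but that subalgebra is a homomorphic image of $C(S^{d-1})$, so its characters pull back from characters of $C(S^{d-1})$, which closes the argument.
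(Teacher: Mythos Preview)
Your argument follows the same two-case split as the paper's proof (nonzero versus zero value of $(\chi(x_1),\dots,\chi(x_d))$, which is your $r^2\neq 0$ versus $r^2=0$), and the forward direction in each case is handled correctly and in the same way: the relation $(x_1^2+\cdots+x_d^2)f_{kl}=x_kx_l$ pins down $\chi(f_{kl})$ when $r^2\neq 0$, and when $r^2=0$ the restriction of $\chi$ to the subalgebra generated by the $f_{kl}$ is a character of $C(S^{d-1})$, hence a point evaluation.

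There is one genuine slip in your converse for $\chi^t$. You write that $\chi^t$ ``factors through the quotient of $\R_d[\ux]\otimes C(S^{d-1})$ by the relations $x_jf_{kl}=0$ and $x_jx_k=0$'' and that $\cA$ is a quotient of this tensor algebra. The second claim is true, but the relations $x_jf_{kl}=0$ and $x_jx_k=0$ do \emph{not} hold in $\cA$ (for instance $x_1f_{11}=x_1^3/(x_1^2+\cdots+x_d^2)$ is a nonzero function), so this does not establish that $\chi^t$ descends to $\cA$. What you actually need is that the evaluation $g\otimes h\mapsto g(0)h(t)$ on $\R_d[\ux]\otimes C(S^{d-1})$ kills the kernel of the surjection onto $\cA$; equivalently, that every polynomial relation among the $x_j$ and $f_{kl}$ as functions on $\R^d\setminus\{0\}$ is respected by $x_j\mapsto 0$, $f_{kl}\mapsto t_kt_l$. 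The paper handles this more cleanly by observing that the quotient $\cA/\langle x_1,\dots,x_d\rangle$ is (isomorphic to) $C(S^{d-1})$, so that any character of $C(S^{d-1})$ pulls back along the quotient map to a character of $\cA$ annihilating each $x_j$. Replacing your tensor-algebra sentence with this observation fixes the gap; the rest of your proof is fine and matches the paper's.
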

 \begin{proof}
It is obvious that for any $x\in \R^d\backslash \{0\}$ the point evaluation $\chi_x$ at $x$ is a character on the algebra $\cA$ satisfying $(\chi(x_1),\dots,\chi(x_d))\neq 0$.

Conversely, let $\chi$  be a character of $\cA$ such that
 $x:=(\chi(x_1),\dots,\chi(x_d))\neq 0$.
Then the identity $(x_1^2+\dots+x_d^2)f_{kl}=x_kx_l$ implies  that
$$(\chi(x_1)^2+\dots+\chi(x_d)^2)\chi(f_{kl})=\chi(x_k)\chi(x_l)$$ and therefore
$$\chi(f_{kl})=(\chi(x_1)^2+\dots+\chi(x_d)^2)^{-1}\chi(x_k)\chi(x_l)=f_{kl}(x).$$
Thus  $\chi$ acts on the generators $x_j$ and $f_{kl}$, hence on the whole algebra $\cA$, by point evaluation at $x$, that is, we have $\chi=\chi_x$.

Next let us note that the quotient of $\cA$ by the ideal generated by $\R_d[\ux]$ is (isomorphic to) the algebra $C(S^{d-1})$.
Therefore, if $\chi$ is a character of $\cA$ such that $(\chi(x_1),\dots,\chi(x_d))=0$, then it gives  a character on the algebra $C(S^{d-1})$. Clearly, each character of $C(S^{d-1})$ comes from a point  of $S^{d-1}$. Conversely, each point  $t\in S^{d-1}$ defines a unique character of $\cA$ by $\chi^t(f_{kl})=f_{kl}(t)$ and $\chi^t(x_j)=0$ for all $k,l,j$.
\end{proof}

 \begin{thm}\label{mpshperesinrd}
The preordering $\sum \cA^2$ of the algebra $\cA$ satisfies (MP), that is, for each positive linear functional $\cL$ on $\cA$ there exist positive Borel measures $\nu_0$ on $S^{d-1}$ and $\nu_1\in \cM(\R^d\backslash \{0\}) $ such that for all polynomials $g$ we have
 \begin{align}\label{descptionltilde}
 \cL&(g(x,f_{11}(x),\dots, f_{dd}(x)))\nonumber\\&=\int_{S^{d-1}} g(0,f_{11}(t),\dots,f_{dd}(t)) \, d\nu_0(t) +\int_{\R^d\backslash \{0\}} g(x,f_{11}(x),\dots,f_{dd}(x)) ~ d\nu_1(x).
 \end{align}
 \end{thm}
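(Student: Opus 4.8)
The plan is to apply the general fibre theorem (Theorem \ref{fibreth1}) with the bounded functions $h$ chosen to be precisely the generators $f_{kl}$ of $C(S^{d-1})$. Concretely, take $\cA$ as in the statement, $\cT=\sum\cA^2$, and ${\sf h}=(f_{11},\dots,f_{dd})$ (all $d^2$ of them, or the finitely many independent ones). Each $f_{kl}$ satisfies $|f_{kl}(x)|\le 1$ on all of $\hat{\cA}$, so the hypothesis that $h_1,\dots,h_m$ are bounded on $\cK(\cT)=\hat{\cA}$ is met. By Theorem \ref{fibreth1}, it suffices to verify (MP) for the fibre preorderings $\cT_\lambda/\cI_\lambda$ in the quotient algebras $\cA/\cI_\lambda$ for every $\lambda\in{\sf h}(\hat{\cA})$, where $\cI_\lambda$ is the ideal generated by the $f_{kl}-\lambda_{kl}\cdot 1$.

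The key step is therefore to identify the fibre algebras $\cA/\cI_\lambda$. By Lemma \ref{charactersetahat} the character set $\hat{\cA}$ splits into $\R^d\backslash\{0\}$ (where $f_{kl}(x)=x_kx_l/\|x\|^2$) and $S^{d-1}$ (where $f_{kl}(t)=t_kt_l$ and $x_j=0$). A point $\lambda=(\lambda_{kl})$ lies in ${\sf h}(\hat{\cA})$ iff it equals $(t_kt_l)$ for some $t\in S^{d-1}$ — that is, $\lambda$ is a rank-one symmetric positive semidefinite matrix of trace one, parameterized by $t\in S^{d-1}$ modulo sign. Fix such a $\lambda$, corresponding to a point $t_0\in S^{d-1}$. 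On the fibre, every $f_{kl}$ is frozen to $\lambda_{kl}$, so the only remaining coordinates are $x_1,\dots,x_d$; moreover $\|x\|^2 f_{kl}=x_kx_l$ forces $x_kx_l=\lambda_{kl}\|x\|^2$, which says $x$ is a scalar multiple of $t_0$. Hence in $\cA/\cI_\lambda$ (or in $\cA/\hat{\cI}_\lambda$, after passing to the real radical) the algebra is generated by a single element $s$, namely the signed "radial coordinate" $s=\langle x,t_0\rangle$ along the ray through $t_0$, with $x_j=t_{0,j}\,s$. So $\cA/\hat{\cI}_\lambda\cong\R[s]$, the polynomial algebra in one variable, whose character set is $\{s\in\R\}$ — here $s=0$ corresponds to the sphere point $\chi^{t_0}$ and $s\ne 0$ to the ray points. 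Since $\sum\R[s]^2$ has (MP) by the classical one-dimensional (Hamburger) moment theorem, the fibre condition $(iii)'$ of Theorem \ref{fibreth1} holds, and (MP) for $\sum\cA^2$ follows. (One may invoke Corollary \ref{corquotient2} directly: take $\cB_\lambda=\R[s]$, which is finite-dimensional-free... actually $\R[y]$ itself, a finitely generated algebra with $\sum\cB_\lambda^2$ having (MP), and $\rho_\lambda:\R[y]\to\cA/\hat{\cI}_\lambda$ the obvious surjection $y\mapsto s$.)

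It remains to extract the measure-theoretic conclusion \eqref{descptionltilde} from the abstract (MP). Once $\sum\cA^2$ has (MP), any positive functional $\cL$ is $\int_{\hat{\cA}}\cdot\,d\mu$ for a positive Borel measure $\mu$ on $\hat{\cA}$. Using the decomposition $\hat{\cA}=(\R^d\backslash\{0\})\sqcup S^{d-1}$ from Lemma \ref{charactersetahat}, write $\mu=\nu_1+\nu_0$ with $\nu_1$ supported on $\R^d\backslash\{0\}$ and $\nu_0$ on $S^{d-1}$; evaluating a polynomial $g(x,f_{11}(x),\dots,f_{dd}(x))$ at a character in the first piece gives $g(x,f_{11}(x),\dots,f_{dd}(x))$ and at a character $\chi^t$ in the second piece gives $g(0,f_{11}(t),\dots,f_{dd}(t))=g(0,t_1t_1,\dots,t_dt_d)$, which is exactly \eqref{descptionltilde}.

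The main obstacle I expect is the careful computation of the fibre algebra $\cA/\hat{\cI}_\lambda$: one has to be precise that freezing all the $f_{kl}$ to a rank-one matrix $(t_{0,k}t_{0,l})$ really does collapse $\R^d$ down to the single line $\R t_0$ (so the radical quotient is genuinely $\R[s]$ and nothing larger, e.g. not $\R[s]$ with extra nilpotents or a reducible variety with two rays glued at the origin). In fact $\cZ(\cI_\lambda)$ is the union of the two opposite rays $\R_{\ge 0}t_0$ and $\R_{\le 0}t_0$, which is just the line $\R t_0$; its coordinate ring is $\R[s]$, and one checks $f_{kl}$ pulls back to $t_{0,k}t_{0,l}$ as required, so $\hat{\cI}_\lambda$ is consistent. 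A secondary point is to confirm that $\lambda$ and $-$... there is no sign ambiguity here since $f_{kl}=f_{lk}$ and $\lambda_{kl}=t_kt_l$ is invariant under $t\mapsto -t$, and correspondingly the fibre over $\lambda$ contains both rays — which is exactly why the single variable $s$ ranges over all of $\R$ rather than a half-line. With this identification in hand, the rest is a direct application of Theorem \ref{fibreth1} (or Corollary \ref{corquotient2}) and the classical one-dimensional moment theorem.
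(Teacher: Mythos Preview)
Your proposal is correct and follows essentially the same route as the paper: take the $f_{kl}$ as the bounded functions $h$, observe that on each fibre the relations $\lambda_{kl}\|x\|^2=x_kx_l$ force all $x_l$ to be scalar multiples of a single coordinate, so the fibre algebra is a one-variable polynomial algebra and Hamburger's theorem gives (MP); then invoke the implication (iii)$\to$(i) (or (iii)$'\to$(i)) of Theorem~\ref{fibreth1} and read off the measure decomposition from Lemma~\ref{charactersetahat}. The only minor difference is that you work with the radical ideal $\hat{\cI}_\lambda$ and parametrize the fibre via the signed radial coordinate $s=\langle x,t_0\rangle$, whereas the paper argues directly in $\cA/\cI_\lambda$ and uses $x_k$ (for some $k$ with $\lambda_{kk}\neq 0$) as the single generator---your choice of $\hat{\cI}_\lambda$ is arguably cleaner, since the relation $x_l=\lambda_{kl}\lambda_{kk}^{-1}x_k$ is most transparently justified at the level of characters.
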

 \begin{proof}It suffices to prove that $\sum \cA^2$ has (MP). The  assertions follow  then from the definition of the property (MP) and the explicit form  of the character set  given in Lemma \ref{charactersetahat}.

 From the description of  $\hat{\cA}$ it is obvious that the functions $f_{kl}$, $k,l=1,\dots,d,$ are bounded on $\hat{\cA}$, so we can take them as functions $h_j$ in Theorem \ref{fibreth1}. Let us fix a  non-empty fire  for  $\lambda=(\lambda_{kl})$, where  $\lambda_{kl}\in\R$ for all $k,l$, and consider the quotient algebra\, $\cA/\cI_\lambda$\, of $\cA$ by the fibre ideal\, $\cI_\lambda.$

In the algebra $\cA/\cI_\lambda$ we  have $\chi(f_{kl})=\lambda_{kl}$ for $\chi\in \hat{\cA}$ and all $k,l$.  Let $\chi=\chi_x$, where  $x\in\R^d\backslash \{0\}$. Then $\chi_x(f_{kl})=f_{kl}(x)=\lambda_{kl}$. Since  $1=\sum_{k} f_{kk}(x)=\sum_k \lambda_{kk}$, there is a $k$ such that $\lambda_{kk}\neq 0$. From the equality   $\lambda_{kk}=f_{kk}(x)=x_k^2(x_1^2+\dots+x_d^2)^{-1}$ it follows that $x_k\neq 0$. Thus $\frac{\lambda_{kl}}{\lambda_{kk}}=\frac{f_{kl}(x)}{f_{kk}(x)}=\frac{x_l}{x_k}$, so that
\begin{align}\label{quetienalgrelation}
x_l= \lambda_{kl}\lambda_{kk}^{-1}\,x_k \quad {\rm for}~~ l=1\dots,d.
\end{align}
If $\chi=\chi^t$ for $t\in S^d$, then $\chi(x_l)=\chi(x_k)=0$, so  (\ref{quetienalgrelation}) holds trivially. That is, in the quotient algebra $\cA/\cI_\lambda$ we have the relations (\ref{quetienalgrelation}) and $ f_{kl}=\lambda_{kl}$.
 This implies that\, $\cA/\cI_\lambda$\,   is an algebra of polynomials in the single variable $x_k$. Hence it follows from Hamburger's theorem  that the preordering $\sum (\cA/\cI_\lambda)^2$ satisfies (MP) in $\cA/\cI_\lambda$. Therefore, by  Theorem \ref{fibreth1},(iii)$\to$(i),  $\cT=\sum \cA^2$ obeys (MP) in $\cA$.
\end{proof}

The main result of this  section is the following extension theorem.
\begin{thm}\label{extensionthm}
A linear functional $L$ on $\R_d[\ux]$ is a  moment functional if and only if it has an extension to a positive linear functional $\cL$ on the larger algebra $\cA$.
\end{thm}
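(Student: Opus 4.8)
The plan is to prove the two implications separately. The substantial one is that a positive extension to $\cA$ forces $L$ to be a moment functional, and here Theorem \ref{mpshperesinrd} --- hence ultimately the fibre theorem --- does the work. So assume first that $L$ extends to a positive linear functional $\cL$ on $\cA$. By Theorem \ref{mpshperesinrd} the preordering $\sum\cA^2$ has (MP), so $\cL$ is represented by positive Borel measures $\nu_0$ on $S^{d-1}$ and $\nu_1$ on $\R^d\backslash\{0\}$ as in (\ref{descptionltilde}). Taking the polynomial $g$ in (\ref{descptionltilde}) to be a polynomial $p\in\R_d[\ux]$ in $x$ alone and using $\chi^t(x_j)=0$ from Lemma \ref{charactersetahat}, the sphere term collapses and one obtains
\[
L(p)=\cL(p)=p(0)\,\nu_0(S^{d-1})+\int_{\R^d\backslash\{0\}}p\,d\nu_1\qquad\text{for all }p\in\R_d[\ux].
\]
Since $\nu_0,\nu_1$ are finite and each $p\in\R_d[\ux]$ is integrable against the measure on $\hat{\cA}$ representing $\cL$, the measure $\mu:=\nu_1+\nu_0(S^{d-1})\,\delta_0$ on $\R^d$ represents $L$; thus $L$ is a moment functional.

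Conversely, let $\mu$ be a positive Borel measure on $\R^d$ with $L(p)=\int_{\R^d}p\,d\mu$ for all $p\in\R_d[\ux]$. Put $c:=\mu(\{0\})$ and $\mu_1:=\mu\lceil(\R^d\backslash\{0\})$, fix any point $t_0\in S^{d-1}$, and let $\chi^{t_0}\in\hat{\cA}$ be the corresponding character from Lemma \ref{charactersetahat}. Since $|f_{kl}(x)|\le 1$ for $x\in\R^d\backslash\{0\}$, every $F\in\cA$ is bounded in absolute value by a polynomial, hence lies in $L^1(\R^d\backslash\{0\};\mu_1)$; therefore
\[
\cL(F):=c\,\chi^{t_0}(F)+\int_{\R^d\backslash\{0\}}F\,d\mu_1,\qquad F\in\cA,
\]
is a well-defined linear functional on $\cA$. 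For $F=p\in\R_d[\ux]$ one has $\chi^{t_0}(p)=p(0)$, so $\cL(p)=c\,p(0)+\int_{\R^d\backslash\{0\}}p\,d\mu_1=\int_{\R^d}p\,d\mu=L(p)$, i.e.\ $\cL$ extends $L$. Because $\chi^{t_0}$ is an algebra homomorphism, $\cL(F^2)=c\,\chi^{t_0}(F)^2+\int_{\R^d\backslash\{0\}}F^2\,d\mu_1\ge 0$, so $\cL$ is positive.

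The step I expect to be the main obstacle is the treatment of the origin in this converse direction: the generators $f_{kl}$ oscillate as $x\to 0$ and have no limit there, so an atom of the representing measure at the origin cannot be accommodated by simply restricting the integral to $\R^d\backslash\{0\}$ --- it must be transported to a point of the sphere $S^{d-1}\subseteq\hat{\cA}$, which is legitimate exactly because every character $\chi^t$ of $\cA$ restricts to evaluation at $0$ on $\R_d[\ux]$. The remaining ingredients --- boundedness of the $f_{kl}$ to secure integrability, finiteness of $\nu_0,\nu_1$, and the appeal to Theorem \ref{mpshperesinrd} --- are routine.
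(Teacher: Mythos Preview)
Your proof is correct and follows essentially the same route as the paper. The only cosmetic difference is in the converse direction: the paper picks the specific character $\chi(f)=\lim_{t\to 0}f(t,0,\dots,0)$, which is precisely your $\chi^{t_0}$ for $t_0=(1,0,\dots,0)\in S^{d-1}$, whereas you allow an arbitrary $t_0\in S^{d-1}$; and you make the integrability of $F\in\cA$ against $\mu_1$ explicit via $|f_{kl}|\le 1$, which the paper leaves implicit.
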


\begin{proof}
Assume first that $L$ has an extension to a positive linear functional $\cL$ on $\cA$. By Theorem \ref{mpshperesinrd}, the functional $\cL$ on $\cA$ is of the form described by equation (\ref{descptionltilde}). We define a positive Borel measure $\mu$ on $\R^d$ by $$\mu(\{0\})=\nu_0(S^{d-1}),\quad  \mu(M\backslash \{0\})=\nu_1(M\backslash \{0\}).$$ Let $p\in \R_d[\ux]$. Setting $g(x,0,\dots,0)=p(x)$ in the equation of Theorem \ref{mpshperesinrd}, we get
$$
L(p)=\cL(p)= \nu_0(\{0\})p(0)+ \int_{\R^d\backslash \{0\}} g(x,0,\dots,0) ~ d\nu_1(x)=\int_{\R^d} p(x)\, d\mu(x).
$$
Thus $L$ is moment functional on $\R_d[\ux]$ with representing measure $\mu$.

Conversely, suppose that  $L$ is a moment functional on $\R_d[\ux]$ and let $\mu$ be a  representing measure.
Since $f_{kl}(t,0,\dots,0)=\delta_{k1}\delta_{l1}$ for $t\in \R, t\neq 0,$  we have
$\lim_{t\to 0} f_{kl}(t,0,\dots,0)=\delta_{k1}\delta_{l1}$. Hence there is  a well-defined character on the algebra $\cA$ given by
$$\chi(f)=\lim_{t\to 0} f(t,0,\dots,0),\quad f\in \cA,$$
and $\chi(p)=p(0)$ for $p\in \R[\ux]$. Then, for $f\in \R_d[\ux]$, we have

\begin{align}\label{defltilde}
L(f)=\mu(\{0\})\chi(f)+\int_{\R^d\backslash \{0\}} f(x)\, d\mu(x).
\end{align}

For $f\in \cA$ we define $\cL(f)$ by the right-hand side of (\ref{defltilde}). Then $\cL$ is a  positive linear functional on $\cA$ which extends $L$.
\end{proof}

Remarks. 1. The problem of  characterizing moment sequences in terms of  extensions have been studied in several papers such as \cite{stochelsz}, \cite{pv}, and \cite{stochelsz2}.

2. Another type of extension theorems has been  derived in \cite{pv}. The main difference  to the above theorem is that in \cite{pv}, see e.g. Theorem 2.5, a function $$h(x):=(1+x_1^2+\dots+x_d^2+p_1(x)^2+\dots+p_k(x)^2)^{-1}$$ is added to the algebra, where $p_1,\dots,p_k\in \R_d[\ux]$ are fixed. Then $h(x)$ is  bounded  on the  character set and  so are $x_jh$ and $x_jx_kh$ for $j,k=1,\dots,d$. The existence assertions of the results in \cite{pv} follow also  from  Theorem \ref{fibreth1}. Note that in this case the representing measure for the extended functional is unique (see \cite[Theorem 2.5]{pv}).

2. The measure $\nu_1$ in Theorem \ref{mpshperesinrd} and  the representing measure $\mu$ for the functional $\cL$ in Theorem \ref{extensionthm} are  not uniquely determined by $\cL$. (A counter-example can be easily constructed by taking an appropriate measure supported by a coordinate axis.)  Let $\mu_{\rm rad}$ denote the measure on $[0,+\infty)$  obtained by transporting $\mu$  by the mapping $x\to \|x\|^2$. Then, as shown in \cite[p. 2964, Nr 2.]{ps}, if  $\mu_{\rm rad}$ is determinate on $[0,+\infty)$, then $\mu$ is is uniquely determined by $\cL$. Thus,  Theorem \ref{extensionthm} fits nicely to the determinacy results via disintegration of measures developed in \cite[Section 8]{ps}.

\section{Application to the complex moment problem}\label{applcmp}
Given a  complex $2$-sequence $s=(s_{m,n})_{(m,n)\in \N_0^2}$ the  complex moment problem asks when does there exist a positive Borel measure $\mu$ on $\C$ such that
the function $z^m\ov{z}^n$ on $\C$ is $\mu$-integrable and
\begin{align}\label{cmp}
s_{mn} =\int_\C~ z^m\ov{z}^n\, d\mu(z)\quad {\rm for~all}\quad (m,n)\in \N_0^2 .
\end{align}

The semigroup $*$-algebra  $\C [\N_0^2]$ of the $*$-semigroup $\N_0^2$ with involution $(m,n):=(n,m)$, $(m,n)\in \N_0^2$, is the $*$-algebra $\C[z,\ov{z}]$ with involution given by $z^*=\ov{z}$. If $L$ denotes the linear functional on $\C[z,\ov{z}]$ defined by
\begin{align*}
L(z^m\ov{z}^n)=s_{m,n},\quad (m,n)\in \N_0^2
\end{align*} then (\ref{cmp}) means that
\begin{align*}
L_s(p)=\int_\C \, p(z,\ov{z})\, d\mu(z), \quad p\in\C[z,\ov{z}].
\end{align*}
Clearly,  $\N_0^2$ is a subsemigroup of the larger $*$-semigroup
$$
\cN_+=\{(m,n)\in \Z^2: m+n\geq 0\} \quad {\rm with ~involution}\quad (m,n)^*=(n,m).
$$
The following fundamental theorem was proved by J. Stochel and F.H. Szafraniec \cite{stochelsz}.
\begin{thm}\label{extcmpstochelsz}
A linear functional $L$ on $\C[z,\ov{z}]$ is a moment functional if and only if $L$ has an extension to a positive linear functional $\cL$ on the $*$-algebra $\C[\cN_+]$.
\end{thm}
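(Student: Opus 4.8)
The plan is to mimic the structure of the proof of Theorem \ref{extensionthm}, replacing the real algebra $\cA$ of functions on $(\R^d)^\times$ by the complex $*$-algebra $\C[\cN_+]$ and the sphere $S^{d-1}$ by the circle $\Circle$. First I would identify $\C[\cN_+]$ concretely: setting $u:=(1,-1)$ and $\ov u:=(-1,1)$, one has $u\ov u=\ov u u = (0,0)=1$, so $u$ is a unitary in the $*$-algebra, and $\C[\cN_+]$ is generated by $z,\ov z, u,\ov u$ with the relations $u\ov u=1$ and $zu=(2,-1)$ etc. Equivalently $\C[\cN_+]=\C[z,\ov z][u,u^{-1}]/(u\ov u-1)$, the algebra of ``Laurent-type'' elements; note $u$ plays the role of the bounded functions $f_{kl}$, since on any representing measure it will correspond to $z/\ov z=z/|z|^2\cdot\ov z$... more precisely to $z|z|^{-2}\cdot\ov z$, a point of $\Circle$ when $z\ne 0$.

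Second, I would apply the general fibre theorem. The hermitian part $\cC:=\C[\cN_+]_h$ is a finitely generated commutative real unital algebra, and by the reduction discussed after Corollary \ref{corquotient2}, $L$ being a moment functional on $\cC$ is equivalent to the complex statement on $\C[\cN_+]$; moreover $\sum\C[\cN_+]^2=\sum\cC^2$. The element $u$ (or rather its real and imaginary parts $\tfrac12(u+\ov u)$, $\tfrac1{2i}(u-\ov u)$) is bounded on the character space $\hat\cC$, because any character sends $u$ to a point of modulus $1$. So I take these as the $h_j$ in Theorem \ref{fibreth1}. I then need to describe $\hat\cC$: a character $\chi$ is determined by $\zeta:=\chi(z)\in\C$ and $\omega:=\chi(u)\in\Circle$ subject to $\zeta=|\zeta|^2\omega\cdot$... actually subject to $z\ov z\cdot u = z\cdot z$ in the algebra, i.e. $|\zeta|^2\omega=\zeta^2$, which forces $\omega=\zeta/\ov\zeta$ when $\zeta\ne 0$ and leaves $\omega\in\Circle$ free when $\zeta=0$. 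Hence $\hat\cC\cong(\C\setminus\{0\})\sqcup\Circle$, exactly parallel to Lemma \ref{charactersetahat}.

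Third, I compute the fibres. Fix $\lambda\in\Circle$ (a value of $u$ in the character space); the fibre algebra $\cC/\cI_\lambda$ sets $u=\lambda$, hence $z\ov z\lambda=z^2$, so on the part where $z\ne 0$ we get $\ov z=\lambda^{-1}z$, and the fibre algebra becomes an algebra of polynomials in the single real/complex variable... concretely $\C[z,\ov z]/(\ov z-\lambda^{-1}z)$ together with the trivial fibre over $\Circle$, which is one-dimensional. This is a finitely generated algebra of polynomials in essentially one variable $z$, so $\sum(\cC/\cI_\lambda)^2$ has (MP) by Hamburger's theorem, just as in the proof of Theorem \ref{mpshperesinrd}. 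Thus Theorem \ref{fibreth1}, (iii)$\to$(i), gives that $\sum\cC^2$ has (MP) in $\cC$, which means every positive $\cL$ on $\C[\cN_+]$ is a moment functional on $\hat\cC$. Then, exactly as in the proof of Theorem \ref{extensionthm}, I disintegrate the representing measure into its part on $\C\setminus\{0\}$ (pushing forward to a measure on $\C$) and its part on $\Circle$ (which collapses to a point mass at $0\in\C$ under $z^m\ov z^n$ with $m+n>0$, and contributes to $s_{0,0}$), obtaining a representing measure on $\C$ for the original $L$ on $\C[z,\ov z]$. Conversely, given a representing measure $\mu$ for $L$ on $\C$, I build the extension $\cL$ on $\C[\cN_+]$ by the formula
\begin{align*}
\cL(f)=\mu(\{0\})\,\chi_0(f)+\int_{\C\setminus\{0\}} f(z,\ov z, z|z|^{-2}\ov z,\dots)\,d\mu(z),
\end{align*}
where $\chi_0$ is the character obtained by letting $z\to 0$ radially (so $u\mapsto 1$); positivity is immediate from $\sum\C[\cN_+]^2=\sum\C[z,\ov z]^2$ on the integral part and positivity of $\chi_0$ on the point-mass part.

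The main obstacle I anticipate is the bookkeeping in identifying $\C[\cN_+]$ with the right quotient and pinning down $\hat\cC$ precisely — in particular checking that the only relations among $z,\ov z,u$ are the evident ones $u\ov u=1$ and $z\ov z\,u=z^2$ (equivalently $\ov z\,u=z$ when multiplied appropriately), so that the fibre algebras really do reduce to one-variable polynomial algebras and Hamburger applies cleanly. Everything after that is a routine transcription of the proofs of Theorems \ref{mpshperesinrd} and \ref{extensionthm}.
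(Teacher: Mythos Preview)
Your proposal is correct and follows essentially the same route as the paper. The paper, however, is more economical: writing $z=x_1+{\sf i}x_2$ it computes that the hermitian part $\C[\cN_+]_h$ is \emph{exactly} the real algebra $\cA$ of Section~\ref{extensionsection} in the case $d=2$ (generated by $x_1,x_2$ and the three functions $f_{kl}=x_kx_l(x_1^2+x_2^2)^{-1}$), so that Theorem~\ref{extcmpstochelsz} follows immediately from Theorem~\ref{extensionthm} without re-running the fibre argument or the character-space computation you outline.
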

In \cite{stochelsz} this theorem was stated
in terms of semigroups:\smallskip

{\it A complex sequence $s=(s_{m,n})_{(m,n)\in \N_0^2}$   is a moment sequence on $\N_0^2$ if and only if
there exists a positive semidefinite sequence\,  $\tilde{s}=(\tilde{s}_{m,n})_{(m,n)\in \cN_+}$\,  on the $*$-semigroup $\cN_+$ such that $\tilde{s}_{m,n}=s_{m,n}$ for all $(m,n)\in \N_0^2$.}
\medskip

In order to prove Theorem \ref{extcmpstochelsz} we first describe the semigroup $*$-algebra $\C[\cN_+]$. Clearly,   $\C[\cN_+]$ is the complex $*$-algebra   generated by the functions $z^m\ov{z}^n$ on $\C\backslash \{0\}$, where $m,n\in \Z$ and $m+n\geq 0$.
 If $r(z)$ denotes the modulus and $u(z)$ the phase of $z$, then  $z^m\ov{z}^n=r(z)^{m+n}u(z)^{m-n}$. Setting $k=m+n$, it follows that
 $$\C[\cN_+]={\Lin}\, \{ r(z)^k u(z)^{2m-k};k\in \N_0,m\in \Z\}\,.
 $$
 The functions $r(z)$ und $u(z)$ itself are not in $\C[\cN_+]$, but $r(z)u(z)=z$ and $v(z):=u(z)^2=z \ov{z}^{-1}$ are in $\C[\cN_+]$ and they  generate the $*$-algebra $\C[\cN_+]$. Writing $z=x_1+{\ii }x_2$ with $x_1,x_2\in \R$, we get
 $$
 1+v(z)=1+\frac{x_1+{\ii}x_2}{x_1-{\ii}x_2} =2\,\frac{x_1^2+{\ii}\,x_1x_2}{x_1^2+x_2^2},~~ 1-v(z)= 2\, \frac{x_2^2-{\ii}\,x_1x_2}{x_1^2+x_2^2}\,.
 $$
This implies that the complex algebra $\C[\cN_+]$ is generated  by the  five functions
\begin{align}\label{fivef}
x_1,~~ x_2,~~ \frac{x_1^2}{x_1^2+x_2^2}\,,~~\frac{x_2^2}{x_1^2+x_2^2}\,,~~\frac{x_1x_2}{x_1^2+x_2^2}\,.
\end{align}
Obviously,  the hermitian part $\C[\cN_+]_h$ of the complex $*$-algebra\, $\C[\cN_+]$\, is just the {\it real} algebra generated by the functions  (\ref{fivef}).
This real algebra is the special case $d=2$ of the $*$-algebra $\cA$ treated in Section \ref{extensionsection}. Therefore, if we identify $\C$ with $\R^2$, the assertion of Theorem \ref{extcmpstochelsz} follows at once from  Theorem \ref{extensionthm}.

\section{Application to the two-sided complex moment problem}\label{twosdidecmp}

The two-sided complex moment problem is the moment problem for the $*$-semigroup $\Z^2$ with involution $(m,n):=(n,m)$.
Given a   sequence $s=(s_{m,n})_{(m,n)\in \Z^2}$ it asks when does there exist a positive Borel measure $\mu$ on $\C^\times:=\C\backslash \{0\}$  such that
the function $z^m\ov{z}^n$ on $\C^\times$ is $\mu$-integrable and
\begin{align*}
s_{mn} =\int_{\C^\times}~ z^m\ov{z}^n\, d\mu(z)\quad {\rm for~all}\quad (m,n)\in \Z^2 .
\end{align*}
Note that this requires conditions for the measure $\mu$ at infinity and at zero.

The following basic result was obtained by T.M. Bisgaard \cite{bisgaard}.
\begin{thm}\label{bisgaardthm}
A linear functional $L$ on $\C[\Z^2]$ is a moment functional if and only if $L$ is a positive functional, that is, $L(f^*f)\geq 0$ for all $f\in \C[\Z^2]$.
\end{thm}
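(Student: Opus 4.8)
The plan is to deduce Bisgaard's theorem from Theorem \ref{rationalmp2} applied to the hermitian part of $\C[\Z^2]$. Only the implication ``positive functional $\Rightarrow$ moment functional'' needs proof, the converse being trivial. Writing $z=x_1+\ii x_2$, the relation $z\ov z=x_1^2+x_2^2=:q$ together with the invertibility of $z$ and $\ov z$ shows that $\C[\Z^2]$ is, as a complex $*$-algebra, generated by the hermitian elements $x_1$, $x_2$ and $q^{-1}$; hence, by the discussion of complexifications at the end of Section \ref{propertiessmpandmpandfibretheorem}, its hermitian part $\cA:=\C[\Z^2]_h$ is the real algebra $\cD^{-1}\R_2[\ux]$, where $\cD\in\mathbf{D}(\R_2[\ux])$ is the multiplicative set generated by $q$. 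This set is finitely generated, $\cZ_\cD=\{0\}$, and by Lemma \ref{hataddecription} the character space $\hat\cA$ is parameterized by $\R^2\setminus\{0\}\cong\C^\times$. Finally $\sum\C[\Z^2]^2=\sum\cA^2$, so a linear functional $L$ on $\C[\Z^2]$ is positive if and only if its restriction to $\cA$ is an $\R$-valued, $\sum\cA^2$-positive linear functional on $\cA$.

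Thus it suffices to show that $\cT:=\sum\cA^2$ has (MP) in $\cA$: transporting the resulting representing measure from $\R^2\setminus\{0\}$ to $\C^\times$ and extending $\C$-linearly to $\cA+\ii\cA=\C[\Z^2]$ then yields a representing measure for $L$ on $\C[\Z^2]$, the functions $z^m\ov z^n$ being automatically $\mu$-integrable since they lie in $\cA$. To apply Theorem \ref{rationalmp2} I would use, as in Theorem \ref{mpshperesinrd}, the bounded elements $h_1:=x_1^2q^{-1}$, $h_2:=x_2^2q^{-1}$, $h_3:=x_1x_2q^{-1}$: since $0\le h_1,h_2\le 1$ and $|h_3|\le\tfrac12$ on $\cK(\cT)=\hat\cA=\R^2\setminus\{0\}$, assumption (i) holds. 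For assumption (ii), fix $\lambda=(\lambda_1,\lambda_2,\lambda_3)\in{\sf h}(\cK(\cT))$ and examine the fibre algebra $\cA/\cI_\lambda$. From $h_1+h_2=1$ we get $\lambda_1+\lambda_2=1$, so we may assume $\lambda_1\ne 0$ (the case $\lambda_1=0$, i.e.\ $\lambda_2=1$, is symmetric in $x_1,x_2$). The identities $qh_1=x_1^2$ and $qh_3=x_1x_2$ descend to $\lambda_1 q=x_1^2$ and $\lambda_3 q=x_1x_2$ in $\cA/\cI_\lambda$; since $q$ is a unit and $\lambda_1\ne 0$, the element $x_1^2=\lambda_1 q$ is invertible, hence so is $x_1$, and cancelling $x_1$ in $\lambda_1 x_1x_2=\lambda_3 x_1^2$ gives $x_2=(\lambda_3\lambda_1^{-1})x_1$. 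Consequently $q=(1+\lambda_3^2\lambda_1^{-2})x_1^2$ and $q^{-1}=(1+\lambda_3^2\lambda_1^{-2})^{-1}x_1^{-2}$, so $\cA/\cI_\lambda$ is generated by $x_1$ and $x_1^{-1}$ and is a homomorphic image of $\cE_\lambda^{-1}\R[y]\otimes\cC_\lambda$, where $\cE_\lambda\in\mathbf{D}(\R[y])$ is generated by $y$, $\cC_\lambda:=\R$, and $\rho_\lambda(y)$ is the image of $x_1$. Hence assumption (ii) holds and Theorem \ref{rationalmp2} gives that $\cA$ obeys (MP), completing the proof.

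The main obstacle is the fibre computation in the second paragraph: one must check that over each non-empty fibre $\cA/\cI_\lambda$ degenerates to an algebra of Laurent polynomials in one variable. The delicate point is the invertibility of $x_1$ (or $x_2$) in $\cA/\cI_\lambda$, which follows precisely because $q=x_1^2+x_2^2$ is a unit of $\cA$ and one of the scalars $\lambda_1,\lambda_2$ is nonzero; once this is in hand, everything reduces to Proposition \ref{porprationad1} as packaged inside Theorem \ref{rationalmp2}. The remaining points --- identifying $\C[\Z^2]_h$, and noting that the representing measure sits on $\C^\times$ rather than $\C$ because $\hat\cA=\R^2\setminus\{0\}$ --- are routine.
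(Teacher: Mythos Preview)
Your proof is correct and follows essentially the same strategy as the paper: identify the hermitian part of $\C[\Z^2]$, use the bounded elements $x_1^2q^{-1}$, $x_2^2q^{-1}$, $x_1x_2q^{-1}$, show each fibre algebra collapses to a one-variable object with (MP), and apply the fibre theorem. The only packaging difference is that you invoke Theorem~\ref{rationalmp2} (viewing $\cA$ as the localization $\cD^{-1}\R_2[\ux]$ with $\cD$ generated by $q$), whereas the paper appeals to Theorem~\ref{fibreth1} directly; since Theorem~\ref{rationalmp2} is itself derived from Theorem~\ref{fibreth1}, this is not a genuinely different route.

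One point worth noting: your fibre analysis is in fact sharper than the paper's. The paper asserts that $\cA/\cI_\lambda$ is ``a polynomial algebra in a single variable'' and cites Hamburger's theorem, but because $q$ is a unit in $\cA$ the element $x_1$ (or $x_2$) becomes \emph{invertible} in the quotient, so the fibre is really a quotient of the Laurent polynomial ring $\R[y,y^{-1}]=\cE_\lambda^{-1}\R[y]$ with $\cE_\lambda$ generated by $y$, exactly as you write. Your appeal to Proposition~\ref{porprationad1} (via assumption~(ii) of Theorem~\ref{rationalmp2}) handles this case cleanly, whereas the paper's phrasing glosses over the distinction.
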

In terms of $*$-semigroups the main assertion of this theorem says that {\it each positive semidefinite sequence  on $\Z^2$ is a moment sequence on $\Z^2$.} This result  is somewhat surprising, since $C^\times$ has dimension $2$ and no additional condition (such as strong positivity or some appropriate extension) is required.

First we reformulate the semigroup $*$-algebra $\C[\Z^2]$.
Clearly, $\C[\Z^2]$ is generated by the functions $z, \ov{z}, z^{-1},\ov{z}^{\, -1}$ on the complex plane, that is, $\C[\Z^2]$ is the $*$-algebra $\C[z,\ov{z},z^{-1},\ov{z}^{-1}]$ of  complex Laurent polynomials in $z$ and $\ov{z}$. A vector space basis of this algebra is  $\{z^k\ov{z}^l; k,l\in \Z\}$. Writing $z=x_1+{\ii} x_2$ with $x_1,x_2\in \R$ we  have
$$
z^{-1}=\frac{x_1-{\ii} x_2}{x_1^2+x_2^2}~~~{\rm and}~~~{\ov{z}}^{\,-1}=\frac{x_1+{\ii} x_2}{x_1^2+x_2^2}.
$$
Hence $\C[\Z^2]$ is the complex unital $*$-algebra generated by the  hermitian functions
\begin{align}\label{bisgaard}
x_1, ~~x_2,~~ y_1:= \frac{x_1}{x_1^2+x_2^2}\,,~~y_2:=\frac{x_2}{x_1^2+x_2^2}
\end{align}
on $\R^2\backslash \{0\}$.
All four functions are unbounded on $\R^2\backslash \{0\}$ and we have
\begin{align}\label{yx2ide}
(y_1+{\ii}y_2)(x_1-{\ii }x_2)=1.
\end{align}
{\it Proof of Theorem \ref{bisgaardthm}:}\\
As above we identify $\C$ and $\R^2$ in the obvious way. As discussed at the end of Section \ref{propertiessmpandmpandfibretheorem}, the hermitian part of the complex $*$-algebra $\C [\Z^2]$ is a real algebra $\cA$.
First we determine the character set $\hat{\cA}$ of
$\cA$.
Obviously, the point evaluation at each point $x\in  \R^2\backslash \{0\}$ defines uniquely a character $\chi_x$ of $\cA$.  From (\ref{yx2ide}) it follows at once that there is no character $\chi$ on $\cA$ for which $\chi(x_1)=\chi(x_2)=0$. Thus,
$$
\hat{\cA}=\{\chi_x; x\in \R^2, x\neq 0\, \}.
$$
The three functions

$$h_1(x)=x_1y_1=\frac{x_1^2}{x_1^2+x_2^2}\,,~  h_2(x)=x_2y_2=\frac{x_2^2}{x_1^2+x_2^2}\,,~ h_3(x)=2x_1y_2=\frac{x_1x_2}{x_1^2+x_2^2}$$
are elements of $\cA$ and they are bounded on $\hat{\cA}\cong\R^2\backslash \{0\}$.
The same reasoning as  in the proof of Theorem \ref{mpshperesinrd} shows that the fibre algebra $\cA/\cI_\lambda$ for each nonempty fibre is a polynomial algebra in a single variable. Therefore, by Hamburger's theorem,  the preordering $\sum (\cA/\cI_\lambda)^2$ satisfies (MP) in $\cA/\cI_\lambda$ and so does $\sum \cA^2$ in $\cA$ by  Theorem \ref{fibreth1},(iii)$\to$(i). Since\, $\hat{\cA}\cong\R^2\backslash \{0\}=\C^\times$,  this gives the assertion.
\hfill $\Box$
\smallskip

{\bf Remark.} The  algebra $\cA$ generated by the four functions $x_1,x_2,y_1,y_2$ on $\C^\times $ is an interesting structure: The generators satisfy   the  relations
$$
x_1y_1+x_2y_2=1\quad {\rm and}\quad (x_1^2+x_2^2)(y_1^2+y_2^2)=1
$$
and there is a $*$-automorphism $\Phi$ of the real algebra $\cA$ (and hence of the complex $*$-algebra $\C[\Z^2]$) given by $\Phi(x_j)=y_j$ and $\Phi(y_j)=x_j$, $j=1,2$.

\bigskip

\noindent{\bf Acknowledgements.} The author would like to thank  T. Netzer  for valuable discussions on the subject of this paper and M. Wojtylak for useful comments.

\bibliographystyle{amsalpha}

\end{document}